\newcommand*\linenomathpatch[1]{%
  \cspreto{#1}{\linenomath}%
  \cspreto{#1*}{\linenomath}%
  \csappto{end#1}{\endlinenomath}%
  \csappto{end#1*}{\endlinenomath}%
}
\newcommand*\linenomathpatchAMS[1]{%
  \cspreto{#1}{\linenomathAMS}%
  \cspreto{#1*}{\linenomathAMS}%
  \csappto{end#1}{\endlinenomath}%
  \csappto{end#1*}{\endlinenomath}%
}
  \let\linenomathAMS\linenomathWithnumbers
   \patchcmd\linenomathAMS{\advance\postdisplaypenalty\linenopenalty}{}{}{}
   \let\linenomathAMS\linenomathNonumbers
\patchcmd{\mmeasure@}{\measuring@true}{
  \measuring@true
  \ifnum-\linenopenaltypar>\interdisplaylinepenalty
    \advance\interdisplaylinepenalty-\linenopenalty
  \fi
  }{}{}
\newcommand{\X}{\mathcal{X}}
\newcommand{\Hs}{\mathcal{H}}
\newcommand{\I}{\mathbb{I}}
\newcommand{\CalL}{\mathbb{L}}
\newcommand{\N}{\mathcal{N}}
\newcommand{\A}{\mathbb{A}}
\newcommand{\Real}{\mathbb{R}}
\newcommand{\EXP}{\mathbb{E}}
\newcommand{\norm}[1]{\left\lVert#1\right\rVert}
\newcommand{\innerpro}[1]{\left\langle#1\right\rangle}
\newcommand{\dlmf}[1]{%
\cite[%
  \def\nextitem{\def\nextitem{, }}%
  \@for \el:=#1\do{\nextitem\href{http://dlmf.nist.gov/\el}{(\el)}}%
]{Olver:10}%
}
 \newtheorem{lemma}{Lemma}
 \newtheorem{theorem}{Theorem}
 \newtheorem{remark}{Remark}
 \newtheorem{corollary}{Corollary}
 \newtheorem{assumption}{Assumption}
 \newtheorem{definition}{Definition}
\begin{document}
\begin{frontmatter}
\title{On regularized polynomial functional regression}

\author[1]{Markus Holzleitner\corref{cor1}}
\ead{holzleitner@dima.unige.it}

\author[2]{Sergei V. Pereverzyev}
\ead{sergei.pereverzyev@oeaw.ac.at}

\affiliation[1]{organisation={MaLGa Center, Department of Mathematics, University of Genoa}, adressline={Via Dodecaneso 35}, postcode={16146}, city={Genoa}, country={Italy}}

\affiliation[2]{organisation={Johann Radon Institute for Computational and Applied Mathematics, Austrian Academy of Sciences}, adressline={Altenberger Straße 69}, postcode={A-4040}, city={Linz}, country={Austria}}

\cortext[cor1]{Corresponding author}

\begin{abstract}
This article offers a comprehensive treatment of polynomial functional regression, culminating in the establishment of a novel finite sample bound. This bound encompasses various aspects, including general smoothness conditions, capacity conditions, and regularization techniques. In doing so, it extends and generalizes several findings from the context of linear functional regression as well. We also provide numerical evidence that using higher order polynomial terms can lead to an improved performance.
\end{abstract}

\begin{keyword}
Learning theory \sep Functional polynomial regression \sep regularization theory
\end{keyword}
\date{}
\end{frontmatter}


\section{Introduction}
Functional data are frequently used and cover a wide range of applications, e.g. in engineering, econometrics, health-care, medical images, just to name a few. These data are usually given in the form of functions, time series, shapes, or more general objects. Specific examples include traffic flow data, weather data, functional magnetic imaging resonance data and many more. However, their intrinsic infinite dimensionality poses major challenges in the development and analysis of efficient learning algorithms. While the term "functional data analysis" was coined by \cite{ramsay1982data,ramsay1991some}, it is nowadays a very popular research area, and for comprehensive overview articles on the current state of the field, in terms of methods, theory and applications, we refer e.g. to \cite{ramsay2002applied, wang2016functional, kokoszka2017introduction, reiss2017methods}.

In this paper we focus on functional regression with scalar response, and the arguably most well studied method in this context assumes a linear relation between inputs and outputs, so that the responses are described by a linear functional of the (functional) inputs and some additional noise term.
Two main approaches currently exist in the literature to tackle this problem. One is functional principal component analysis \cite{cai2006prediction, hall2007methodology, yao2005functional}, which assumes that the slope can be described by a basis consisting of the leading functional principal components of the explanatory variable. 

The second one assumes that the slope lies in a reproducing kernel Hilbert space (RKHS). This approach allows for using techniques from standard works on kernel regression (see e.g. \cite{caponnetto2007optimal, guo2017learning, lu2020balancing, lin2017distributed}) and is therefore very popular, see e.g. \cite{tong2018analysis, tong2021distributed,yuan2010reproducing} and references therein. Also more sophisticated results have recently been developed in this setting, that allow e.g. for dealing with online learning \cite{chen2022online, guo2023capacity} and distributed learning \cite{tong2021distributed,tong2018analysis,liu2022statistical}. It is also worth mentioning several works in the more general context of linear function on function regression (where the responses are also functions), see e.g. \cite{tong2022non,mollenhauer2022learning,benatia2017functional}.
The goal is always to estimate the slope and intercept of the underlying linear model, based on a regularized linear predictor constructed from training data, and provide associated non-asymptotic convergence guarantees.
All these results have in common, that they lack generality in the sense that they are not simultaneously studying the interplay of smoothness, capacity conditions and general regularization scheme, with one notable exception being \cite{lin2020optimal} in the related setting of least squares regression over Hilbert spaces.
 Just like in standard polynomial regression, where the linear model is embedded into a polynomial version, it is also natural to consider such scenario in the functional setting, in order to get better predictors. However, this field of research is still sparsely investigated, and we are only aware of \cite{yao2010functional}, who propose an extension with quadratic functional regression as the most prominent example. Note also the work by \cite{horvath2013test}, who consider a hypothesis test to find out, whether a linear model is sufficient.
To the best of our knowledge, and similar to the linear case, a systematic and general treatment of functional polynomial regression is still missing. The main aim of our work is to fill this gap and thus our contributions can be summarized as follows:

\begin{itemize}
\item Introduce polynomial functional regression in a rather general setting and derive finite sample bounds. This allows for inclusion of general smoothness conditions, capacity conditions and regularization techniques as done e.g. in the work by \cite{guo2017learning,lin2017distributed, lu2020balancing,pereverzyev2022introduction} for kernel regression in the standard supervised setting. Our work does not even assume an RKHS setting, and, also in this regard, is therefore more general than most of the previous works dealing with the linear case.
\item Provide a systematic way on how the resulting predictor model can be evaluated explicitly in case of (one parameter) Tikhonov regularization. Surprisingly, to compute the required coefficients, we found out that it is sufficient to only evaluate the linear case. 
\item Provide a numerical toy example that shows the possible advantage of using higher order functional regression.
\end{itemize}

Our work will now be structured as follows: the next section introduces the required notions and proves our main result, a finite sample bound for polynomial regression. Then we will discuss an algorithmic realization of regularized polynomial functional regression in the case of Tikhonov regularization and finally, introduce a toy example demonstrating the advantage of using higher order polynomials. 

\section{Setting and main result}

\subsection{Overall setting and assumptions} \label{subsec:notation}

Let $\mathbb{I} \subset \mathbb{R}^d$ and consider the associated space $L^2\left(\mathbb{I}\right)$ consisting of square integrable functions with respect to the Lebesgue measure $\mu$, so that
\begin{align*}
\norm{u}^2_{L^2(\mathbb{I})}=\int_{\mathbb{I}} |u(t)|^2 d \mu(t).
\end{align*}
Moreover, let $L^2(\Omega, \mathbb{P})$ be a space of random variables $Y=Y(\omega)$ defined on a probability space $(\Omega, \mathcal{F},\mathbb{P})$, $\omega \in \Omega$, with bounded second moments, 
so that
\begin{align*}
\norm{Y}^2_{L^2(\Omega, \mathbb{P})}:= \mathbb{E}|Y|^2 = \int_{\Omega} |Y(\omega)|^2 d\mathbb{P}(\omega).
\end{align*}
Consider also the tensor product $L^2(\Omega, \mathbb{P}) \otimes L^2\left(\mathbb{I}\right)$, 
which is nothing but a collection of random variables $X(\omega,s)$ indexed by points $s \in \mathbb{I}$ and having bounded second moments in the following sense:
\begin{align*} 
\norm{X}_{\mathbb{P}, \mu}^2:=\mathbb{E} \norm{X(\omega,\cdot )}^2_{L^2(\mathbb{I})}.
\end{align*}
The inner products in the considered Hilbert spaces $\Hs$ will always be denoted by $\innerpro{.,.}_{\Hs}$, and the space is indicated by a subscript.

Functional data consist of random i.i.d. samples of functions $X_1(s),...,X_N(s)$, that can be seen as a realization of a stochastic process $X(\omega,s) \in L^2(\Omega, \mathbb{P}) \otimes L^2\left(\mathbb{I}\right)$. Now let us discuss the setting of polynomial functional regression (PFR): Let $Y \in L^2(\Omega, \mathbb{P}) $ be a scalar response, and $X \in L^2(\Omega, \mathbb{P}) \otimes L^2\left(\mathbb{I}\right)$ a corresponding functional predictor. We make the following assumption on $X$ (as imposed in a similar way e.g. in \cite{yuan2010reproducing,tong2018analysis}):
\begin{assumption} \label{ass:unif}
\begin{align*}  
\sup_{\omega \in \Omega} \norm{X(\omega,\cdot )}_{L^2(\I)} \le \kappa.
\end{align*}
\end{assumption}
In PFR one aims at minimizing the expected prediction risk: 
\begin{align} \label{eq:pftfr}
    \mathcal{E}(U_p(X))=\mathbb{E} \left( |Y(\omega )-U_p(X(\omega,\cdot )) |^2 \right) \to \min,
\end{align}
where $U_p(X(\omega,\cdot ))$ is a polynomial regression of order $p$:
\begin{align*}
U_p(X(\omega, \cdot))=u_0+\sum_{l=1}^p \int_{\I^l} u_l(s_1,...,s_l) \prod_{j=1}^l  X(\omega,s_j) d\mu(s_j).
\end{align*}
Here $u_l \in L^2_l$, where
\begin{align*}
L^2_l=\underbrace{L^2(\I) \otimes \cdots \otimes L^2(\I)}_{l\text{ -times}}.
\end{align*}

To proceed and formalize the setting further, consider the operator
\begin{align*}
A_0: \Real \to L^2(\Omega, \mathbb{P}) 
\end{align*}
assigning to any $u_0 \in \Real$ the corresponding constant random variable. Moreover, consider $A_l: L^2_l \to  L^2(\Omega, \mathbb{P}) $, such that
\begin{align} \label{eq:aldef}
(A_l u)(\omega)= \int_{\I^l} u_l(s_1,...,s_l) \prod_{j=1}^l  X(\omega,s_j) d\mu(s_j).
\end{align}
Let, also,  $\CalL^2=\bigoplus_{l=0}^p L^2_l$ be a direct sum of spaces $L^2_l$ consisting of finite sequences $u=(u_0,...,u_p)$, 
$u_0 \in L^2_0=\Real$, $u_l \in L^2_l$, $l=1,2,...,p$, equipped with the norm $\norm{u}_{\CalL^2}^2=\sum_{l=0}^p \norm{u_l}_{L^2_l}^2$, and consider the bounded linear operator (which is also a Hilbert-Schmidt one, as will be seen from Lemma \ref{lem:hs_bounds}) $\A:\CalL^2 \to L^2(\Omega, \mathbb{P})$, given by
\begin{align} \label{eq:CalApdef}
\A u=(A_0,A_1,...,A_p) \circ (u_0,u_1,...,u_p)=\sum_{l=0}^p
A_l u_l. 
\end{align}

Observe that 
\begin{align*}
A_l^*: L^2(\Omega, \mathbb{P}) \to L^2_l, (A_l^* Z)(s_1,...s_l)=\int_{\Omega} Z(\omega) \prod_{i=1}^l  X(\omega,s_i) d\mathbb{P}(\omega),
\end{align*}
because
          \begin{align*}
&\innerpro{A_l^* Z, u}_{L^2_l}= \int_{\I^l} \int_{\Omega} Z(\omega) \prod_{i=1}^l  X(\omega,s_i) d\mathbb{P}(\omega) u(s_1,...,s_l)   \prod_{i=1}^l d\mu(s_i)\\
&=\int_{\Omega} Z(\omega) \left(\int_{\I^l} u(s_1,...,s_l) \prod_{i=1}^l  X(\omega,s_i) d\mu(s_i) \right)  d\mathbb{P} (\omega)=\innerpro{Z, A_l u}_{L^2(\Omega, \mathbb{P})},
\end{align*}
and therefore, $\A^* \A$ is a $(p+1) \times (p+1)$ matrix of the operators 
\begin{align*}
\A^* \A= \left\{ A_k^* A_l: L^2_l \to L^2_k, k,l=0,1,...,p \right\},
\end{align*}
where $A_0^{*} A_0u_0 =u_0$ and
\begin{align} 
A_0^{*} A_lu&=\int_{\Omega} \int_{\I^l} u(s_1,...,s_l) \prod_{i=1}^l X(\omega,s_i) d\mu(s_i) d \mathbb{P} (\omega), \nonumber\\
A_k^{*} A_lu(s_1,...,s_k)&=\int_{\Omega} \prod_{j=1}^k X(\omega,s_j) \int_{\I^l} u(\tilde{s}_1,...,\tilde{s}_l) \prod_{i=1}^l X(\omega,\tilde{s}_i) d\mu(\tilde{s}_i)  d \mathbb{P} (\omega),  \nonumber\\
k,l&=1,...,p.  \nonumber 
\end{align}

Equipped with this notation, we have that
$U_p(X(\omega, \cdot))=\A u$, such that \eqref{eq:pftfr} is reduced to the least square solution of the equation $\A u=Y$, because $\mathcal{E}(U_p(X))=\norm{Y-\A u}^2_{L^2(\Omega, \mathbb{P})}$. Let us also use the following standard assumption:
\begin{assumption} \label{eq:ass:projection}
The projection $\mathcal{P} Y$ of $Y$ on the closure of the range of $\A$ is such that $\mathcal{P} Y \in \text{Range}(\A)$.
\end{assumption}
It is well known (see e.g. \cite{lu2013regularization}[Proposition 2.1.]), that under Assumption \ref{eq:ass:projection} the minimizer $u = u^+=(u_0^+,...,u_p^+)$ of \eqref{eq:pftfr} solves $\A^* \A u=\A^* Y$.

For the sake of analysis let us also adopt the following response noise model:
\begin{assumption} \label{as:noise_model}
\begin{align} \label{eq:noise_model}
Y= \A u^+ + \varepsilon,
\end{align}
 where a noise variable $\varepsilon: \Omega \to \Real$ is independent from $X$, $\mathbb{E}(\varepsilon)=0$, and for some $\sigma>0$ it should satisfy either the condition
\begin{align} \label{eq:noise_1}
\mathbb{E}(|\varepsilon(\omega)|^2) \le \sigma^2,
\end{align}
 or obey, for any integer $\tilde{m} \geq 2$ and some $M>0$ , a slightly stronger moment condition, which is also standard in the literature, see e.g. \cite{tong2021distributed}, 
\begin{align} \label{eq:noise_2}
\mathbb{E}(|\varepsilon(\omega)|^{\tilde{m}}) \le \frac12 \sigma^2 \tilde{m}! M^{\tilde{m}-2}.
\end{align}
\end{assumption}
However, the involved operators are inaccessible, because we do not know $\mathbb{P}$. Thus, we want to approximate them by using training data $(Y_i, X_i(\cdot))$, $i=1,...,N$, consisting of $N$ independent samples
of the response and the functional predictor $(Y(\omega), X(\omega, \cdot))$, so that
\begin{align*}
Y_i=\A_i u^+ +\varepsilon_i,
\end{align*}
where $\A_i$ is defined in the same way as $\A$ by the replacement of $X(\omega, \cdot)$
in the formulas \eqref{eq:aldef} and \eqref{eq:CalApdef} with $X_i(\cdot)$, and $\varepsilon_i$ is a sample from the noise variable introduced in Assumption \ref{as:noise_model}.

Moreover, $u^+$ does not depend continuously on the initial datum, such that we need to employ a regularization.

Let us start by considering Tikhonov regularization, i.e. for $\lambda>0$ we want to find the minimizer $u_{\lambda}$ of the regularized PFR
\begin{align} \label{eq:tikpftfr}
\norm{Y-\A u}^2_{L^2(\Omega, \mathbb{P})} + \lambda \norm{u}^2_{\mathbb{L}^2} \to \text{min},
\end{align} 
which solves the equation $\lambda u + \A^* \A u= \A^* Y$ and can be approximated by the solution $u_{\lambda}^N$ of 
\begin{align} \label{eq:tikpftfremp}
\lambda u + [\A^* \A]_N u= [\A^* Y]_N ,
\end{align}
These approximations are given by $[\A^* \A]_N=\left\{ [A_k^* A_l]_N: L^2_l \to L^2_k, k,l=0,1,...,p \right\}$
so that:
\begin{align} 
[A_0^{*} A_0]_N u&=u  \nonumber \\
[A_0^{*} A_l]_N u&=\frac1N \sum_{i=1}^N \int_{\I^l} u(s_1,...,s_l) \prod_{j=1}^l X_i(s_j) d\mu(s_j), \nonumber \\
[A_k^{*} A_l]_N u(s_1,...,s_k)&=\frac1N \sum_{i=1}^N \prod_{j=1}^k X_i(s_j) \int_{\I^l} u(\tilde{s}_1,...,\tilde{s}_l) \prod_{m=1}^l X_i(\tilde{s}_m) d\mu(\tilde{s_m}), \nonumber \\
k,l&=1,...,p. \label{eq:def_a*a_empirical}
\end{align}
 

and $[\A^* Y]_N=([A_0^* Y]_N,...,[A_p^* Y]_N) \in \CalL^2$, so that 
\begin{align}
[A_0^* Y]_N&=\frac1N \sum_{i=1}^N Y_i
\nonumber \\
[A_l^* Y]_N(s_1,...,s_l)&= \frac1N \sum_{i=1}^N Y_i \prod_{j=1}^l X_i(s_j), \nonumber \\
l&=1,...,p. \label{eq:def:a*y_emp}
\end{align}

In Section \ref{sec:tik_computations} we will use the above approximations for computing Tikhonov and interated Tikhonov regularizations in the context of PFR. 

We will also use the fact that for any $u \in \CalL^2$
\begin{align} \label{eq:norm_comparisons1}
\norm{\A u}_{L^2(\Omega, \mathbb{P})}=\norm{\sqrt{\A^* \A} u}_{ \CalL^2},
\end{align}
which follows immediately from the polar decomposition of the operator $\A$.




\subsection{Operator norms and related auxiliary estimates}
Let us move on by collecting several estimates related to the norms of the previously discussed operators. 
To this end let us also introduce the notion of effective dimension:
\begin{definition}
For a compact and self-adjoint operator $T$ on a Hilbert space $\Hs$ with countable sequence of nonnegative eigenvalues 
the associated \textit{effective dimension} is defined as
\begin{align*}
\N(T, \lambda)=\text{Tr}((T+\lambda \I)^{-1}T)=\sum_{j=1}^\infty \frac{\sigma_j}{\lambda+\sigma_j},
\end{align*}
see~\cite{caponnetto2007optimal}. We will use the abbreviation $\N(\lambda)=\N(\A^* \A, \lambda)$.
\end{definition}

The proof of the following lemma is a simple exercise.

\begin{lemma} \label{lem:hs_bounds}
Let $\text{HS}(\Hs_1, \Hs_2)$ denote the Hilbert space of Hilbert-Schmidt operators between Hilbert spaces $\Hs_1$ and $\Hs_2$. For simplicity let us also use $\text{HS}(\Hs_1, \Hs_1)=\text{HS}(\Hs_1).$
Under Assumption \ref{ass:unif} for $1 \le k,l \le p$ we have that
\begin{align*} 
 \norm{[A_l]_N}_{\text{HS}(L^2_l,\Real))}=\norm{[A_l^*]_N}_{\text{HS}(\Real,L^2_l)}  &\le \kappa^l,
 \\
\norm{A_l}_{\text{HS}(L^2_l,L^2(\Omega, \mathbb{P}))}  &\le \kappa^l, 
\\
\norm{A_k^* A_l}_{\text{HS}(L^2_l, L^2_k)} , \norm{[A_k^* A_l]_N}_{\text{HS}(L^2_l, L^2_k)} 
&\le \kappa^{l+k} 
\\
\norm{\A}_{\text{HS}(\CalL^2, L^2(\Omega, \mathbb{P}))} 
&\le  \tilde{\kappa}=:\sum_{l=0}^p \kappa^l 
\\
\norm{\A^* \A}_{\text{HS}(\CalL^2)}, \norm{[\A^* \A]_N}_{\text{HS}(\CalL^2)} 
&\le  \tilde{\kappa}^2 
\end{align*}
\end{lemma}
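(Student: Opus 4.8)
The plan is to bound each Hilbert-Schmidt norm directly from the integral kernel representations derived in the excerpt, using Assumption \ref{ass:unif} to control the pointwise size of the functional data. First I would recall that for an operator $T$ between $L^2$-spaces given by an integral kernel $\kappa_T$, the Hilbert-Schmidt norm equals the $L^2$-norm of that kernel, and that for a rank-one-type operator built from tensor products of the $X_i$ (or $X(\omega,\cdot)$) the kernel factorizes into a product of copies of $X$. Concretely, $A_l u = \innerpro{u, X^{\otimes l}}_{L^2_l}$ as an element of $L^2(\Omega,\mathbb{P})$, so $A_l$ is itself (for fixed realization) a functional with "kernel" $X(\omega,\cdot)^{\otimes l}$, and $\norm{A_l}_{\mathrm{HS}}^2 = \mathbb{E}\norm{X(\omega,\cdot)^{\otimes l}}_{L^2_l}^2 = \mathbb{E}\norm{X(\omega,\cdot)}_{L^2(\I)}^{2l} \le \kappa^{2l}$ by Assumption \ref{ass:unif}; the empirical version $[A_l]_N$ is the analogous object with $\mathbb{E}$ replaced by $\frac1N\sum_i$ and the bound $\kappa^{2l}$ follows verbatim since each sample obeys the same sup-bound. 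Taking adjoints does not change the HS norm, which gives the first two lines.

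Next, for the composite operators $A_k^* A_l$, I would use the explicit kernel
$(A_k^*A_l)(s_1,\dots,s_k;\tilde s_1,\dots,\tilde s_l) = \mathbb{E}\bigl[\prod_{j=1}^k X(\omega,s_j)\prod_{i=1}^l X(\omega,\tilde s_i)\bigr]$
read off from the displayed formula for $A_k^*A_l u$. Then
$\norm{A_k^*A_l}_{\mathrm{HS}}^2 = \int_{\I^{k+l}} \bigl|\mathbb{E}[\cdots]\bigr|^2 \le \int_{\I^{k+l}} \mathbb{E}\bigl[\prod_j X(\omega,s_j)^2\prod_i X(\omega,\tilde s_i)^2\bigr]\,d\mu = \mathbb{E}\bigl[\norm{X(\omega,\cdot)}_{L^2(\I)}^{2(k+l)}\bigr] \le \kappa^{2(k+l)}$,
where the first inequality is Jensen (or Cauchy–Schwarz) applied to the expectation, and the Fubini swap is justified by Assumption \ref{ass:unif}. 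The empirical bound for $[A_k^*A_l]_N$ is identical with $\mathbb{E}$ replaced by the empirical average; one can alternatively argue abstractly via $\norm{A_k^*A_l}_{\mathrm{HS}} \le \norm{A_k^*}_{\mathrm{op}}\norm{A_l}_{\mathrm{HS}} \le \norm{A_k}_{\mathrm{HS}}\norm{A_l}_{\mathrm{HS}} \le \kappa^{k+l}$, using that the operator norm is dominated by the HS norm.

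Finally, for $\A$ and $\A^*\A$ I would use the block structure $\A = \sum_{l=0}^p A_l$ (in the sense of \eqref{eq:CalApdef}) together with subadditivity of the HS norm: $\norm{\A}_{\mathrm{HS}} \le \sum_{l=0}^p \norm{A_l}_{\mathrm{HS}} \le \sum_{l=0}^p \kappa^l =: \tilde\kappa$, where the term $l=0$ contributes $\norm{A_0}_{\mathrm{HS}} = 1 = \kappa^0$ since $A_0$ maps $\Real$ to the constants isometrically. Then $\norm{\A^*\A}_{\mathrm{HS}} \le \norm{\A^*}_{\mathrm{op}}\norm{\A}_{\mathrm{HS}} \le \norm{\A}_{\mathrm{HS}}^2 \le \tilde\kappa^2$, and likewise for the empirical counterpart with the same constants. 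I do not expect any genuine obstacle here; the only point requiring a little care is the justification of interchanging expectation/summation with the spatial integrals and the correct bookkeeping of the $l=0$ term and of the convention $0 \le k,l \le p$ versus $1 \le k,l \le p$ in the various lines, but all of this is controlled uniformly by Assumption \ref{ass:unif}, which is exactly why the lemma is called a simple exercise.
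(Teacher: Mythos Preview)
Your proposal is correct. The paper does not actually give a proof of this lemma --- it simply states that ``the proof of the following lemma is a simple exercise'' --- so your argument supplies exactly the routine verification the authors omit, via the kernel/tensor representation of each $A_l$ together with Assumption~\ref{ass:unif}, and then the block/triangle inequality for $\A$ and the factorization $\|\A^*\A\|_{\mathrm{HS}}\le\|\A\|_{\mathrm{HS}}^2$ for the last two lines.
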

As a next step, we will collect several auxiliary estimates, which are derived by extending the arguments from \cite{guo2017learning} to our PFR-setting. To this end let us introduce the quantities
\begin{align*}
S(N, \lambda)=\frac{2  \tilde{\kappa} }{\sqrt{N}}\left(\frac{\tilde{\kappa} }{\sqrt{N \lambda}}+\sqrt{\N(\lambda)}\right),
\end{align*}
and
\begin{align*}
\Upsilon(\lambda)=\left(\frac{S(N, \lambda)}{\sqrt{\lambda}}\right)^2+1,
\end{align*}
which will appear in the subsequent estimates.
\begin{lemma} \label{lem:op_est_0}
For any $\delta \in (0,1)$, with confidence at least $1-\delta$ we have that 

\begin{align} \label{eq:op_est_0}
\left\| \A^* \A -[\A^* \A]_N  \right\|_{\CalL^2 \to \CalL^2} \le \left\| \A^* \A -[\A^* \A]_N  \right\|_{\text{HS}(\CalL^2)} \leq \frac{4 \tilde{\kappa}^2}{\sqrt{N}} \log \frac{2}{\delta}
 \end{align}
 \end{lemma}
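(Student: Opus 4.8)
The plan is to estimate $\left\| \A^* \A -[\A^* \A]_N \right\|_{\text{HS}(\CalL^2)}$ via a concentration inequality for Hilbert-Schmidt-operator-valued random variables, typically the Bernstein-type bound for i.i.d.\ averages in a Hilbert space (here the Hilbert space is $\text{HS}(\CalL^2)$). First I would observe that $[\A^*\A]_N = \frac1N \sum_{i=1}^N \A_i^* \A_i$, where $\A_i^* \A_i$ is built from the single sample $X_i(\cdot)$ in exactly the same way that $\A^*\A$ is built from $X(\omega,\cdot)$, and crucially $\A^*\A = \EXP[\A_i^*\A_i]$ (this is just the definition of the population operators as expectations; one checks it block by block using the formulas for $A_k^*A_l$ versus $[A_k^*A_l]_N$). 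Hence $\A^*\A - [\A^*\A]_N = \frac1N \sum_{i=1}^N \left( \EXP[\A_i^*\A_i] - \A_i^*\A_i \right)$ is a centered empirical mean of i.i.d.\ random elements $\xi_i := \EXP[\A_i^*\A_i] - \A_i^*\A_i$ of $\text{HS}(\CalL^2)$.

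Next I would bound $\|\xi_i\|_{\text{HS}(\CalL^2)}$ almost surely. By Lemma~\ref{lem:hs_bounds}, each realization $\A_i^*\A_i$ satisfies $\|\A_i^*\A_i\|_{\text{HS}(\CalL^2)} \le \tilde\kappa^2$ under Assumption~\ref{ass:unif}, and the same bound holds for its expectation by Jensen / convexity of the norm; therefore $\|\xi_i\|_{\text{HS}(\CalL^2)} \le 2\tilde\kappa^2$ almost surely, and a fortiori the second moment $\EXP\|\xi_i\|_{\text{HS}}^2 \le (2\tilde\kappa^2)^2$. Plugging these two quantities (the almost-sure bound $L = 2\tilde\kappa^2$ and the variance proxy $\le L^2$) into the standard Hilbert-space Bernstein inequality (see e.g.\ \cite{caponnetto2007optimal}, or the Pinelis--Sakhanenko inequality used in \cite{guo2017learning}) gives, with confidence at least $1-\delta$,
\begin{align*}
\left\| \A^* \A -[\A^* \A]_N  \right\|_{\text{HS}(\CalL^2)} \le \frac{2 \cdot 2\tilde\kappa^2}{\sqrt N}\log\frac{2}{\delta} = \frac{4\tilde\kappa^2}{\sqrt N}\log\frac{2}{\delta},
\end{align*}
which is exactly the claimed bound; the first inequality in~\eqref{eq:op_est_0} is just the general fact that the operator norm is dominated by the Hilbert-Schmidt norm.

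The main obstacle — though it is more bookkeeping than genuine difficulty — is verifying cleanly that $\EXP[\A_i^*\A_i] = \A^*\A$ as operators on $\CalL^2$, i.e.\ that taking expectations commutes with the (block-wise multilinear) construction of $\A^*\A$; this requires Fubini/Tonelli, which is justified by the finiteness of second moments (Assumption~\ref{ass:unif} and the boundedness already recorded in Lemma~\ref{lem:hs_bounds}). One should also state precisely which version of the vector-valued Bernstein inequality is invoked and check that its hypotheses match the almost-sure and variance bounds derived above; since the construction of $\A^*\A$ here is entirely parallel to the kernel case treated in \cite{guo2017learning}, the argument goes through verbatim after replacing their sampling operator by $\A_i$.
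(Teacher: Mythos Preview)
Your approach is essentially the same as the paper's: both express $[\A^*\A]_N$ as an i.i.d.\ average of rank-one-type operators $\mathcal{A}^i$ in $\text{HS}(\CalL^2)$ with expectation $\A^*\A$, bound their HS-norm by $\tilde\kappa^2$ via Lemma~\ref{lem:hs_bounds}, and apply the Pinelis concentration inequality (Lemma~\ref{lem:concentration1}). The only difference is that the paper applies Lemma~\ref{lem:concentration1} directly to the \emph{uncentered} variable $\xi(\omega)=\mathcal{A}(\omega)$, so that $M=\tilde\kappa^2$ and $\EXP\|\xi\|^2\le\tilde\kappa^4$, which after the standard simplifications ($1/N\le 1/\sqrt N$ and $\sqrt{2\log(2/\delta)}\le 2\log(2/\delta)$ for $\delta\in(0,1)$) yields precisely the constant $4\tilde\kappa^2$. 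By centering first you double the almost-sure bound to $L=2\tilde\kappa^2$, and plugging this into Lemma~\ref{lem:concentration1} gives a constant strictly larger than $4\tilde\kappa^2$; so to match the stated constant exactly, drop the explicit centering and feed $\mathcal{A}(\omega)$ itself into the Pinelis lemma.
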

 
\begin{lemma} \label{lem:op_est_1}
For any $\delta \in (0,1)$, with confidence at least $1-\delta$ we have that 

\begin{align} \label{eq:op_est_1}
\left\|(\lambda \I+\A^* \A )^{-1 / 2}\left(\A^* \A -[\A^* \A]_N \right)\right\|_{\CalL^2 \to \CalL^2} & \leq S(N,\lambda) \log \frac{2}{\delta}
 \end{align}
 \end{lemma}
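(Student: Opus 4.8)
The plan is to follow the standard concentration strategy used in kernel regression (as in \cite{guo2017learning}), namely to write the difference $\A^* \A - [\A^* \A]_N$ as an average of i.i.d.\ Hilbert-Schmidt--valued random variables and apply a Bernstein-type inequality, but now working with the whitened operator $(\lambda \I + \A^* \A)^{-1/2}(\A^* \A - [\A^* \A]_N)$. First I would note that $\A^* \A = \EXP[\xi]$ and $[\A^* \A]_N = \frac{1}{N}\sum_{i=1}^N \xi_i$, where $\xi_i = \A_i^* \A_i$ is built from the single sample $X_i$; these are i.i.d.\ copies of a Hilbert-Schmidt operator $\xi$ on $\CalL^2$ with $\EXP[\xi] = \A^* \A$. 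Multiplying through by $(\lambda \I + \A^* \A)^{-1/2}$, which is a fixed bounded operator, the target becomes the deviation of $\frac1N \sum_i \zeta_i$ from its mean, where $\zeta_i = (\lambda \I + \A^* \A)^{-1/2}\xi_i$.

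The two ingredients needed for the Bernstein inequality in Hilbert space are a uniform bound on $\norm{\zeta_i - \EXP\zeta}$ and a bound on the second moment $\EXP\norm{\zeta}^2$ (or the operator-valued variance). For the uniform bound I would use Lemma \ref{lem:hs_bounds}: $\norm{\xi_i}_{\text{HS}(\CalL^2)} \le \tilde\kappa^2$ and $\norm{(\lambda \I + \A^* \A)^{-1/2}} \le \lambda^{-1/2}$, giving a bound of order $\tilde\kappa^2/\sqrt\lambda$. For the variance, the key calculation is to bound $\EXP\norm{(\lambda \I + \A^* \A)^{-1/2}\xi}_{\text{HS}}^2$; here one exploits that $\xi = \A_i^* \A_i$ factors through the rank structure of a single sample, so that $\xi \preceq \tilde\kappa^2 \cdot(\text{something comparable to }\A^* \A)$ in an appropriate sense, and taking traces after inserting $(\lambda \I + \A^* \A)^{-1}$ produces the effective dimension $\N(\lambda)$. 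This is exactly where the term $\sqrt{\N(\lambda)}$ in $S(N,\lambda)$ comes from, and the $\tilde\kappa/\sqrt{N\lambda}$ term comes from the uniform bound contribution in the Bernstein estimate. Balancing the two terms in the Bernstein bound and absorbing the $\log(2/\delta)$ factor yields precisely $S(N,\lambda)\log\frac{2}{\delta}$.

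I expect the main obstacle to be the variance computation: one must carefully verify that $\EXP\norm{(\lambda \I + \A^* \A)^{-1/2} \A_i^*\A_i}_{\text{HS}(\CalL^2)}^2$ is controlled by $\tilde\kappa^2\,\text{Tr}\big((\lambda \I + \A^* \A)^{-1}\A^* \A\big) = \tilde\kappa^2 \N(\lambda)$, which requires an operator-monotonicity / trace argument using Assumption \ref{ass:unif} to dominate the per-sample second moment $\EXP[\A_i^*\A_i\, \A_i^*\A_i]$ by $\tilde\kappa^2\,\A^*\A$ in the Loewner order. Once that domination is established, the rest is a routine invocation of the Pinelis--Sakhanenko / Bernstein inequality for Hilbert-space-valued random variables, together with the crude norm bounds from Lemma \ref{lem:hs_bounds}. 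The operator-norm estimate in the statement follows trivially since the $\CalL^2 \to \CalL^2$ operator norm is dominated by the Hilbert-Schmidt norm, though for this lemma we state the bound directly in operator norm as that is what is needed downstream.
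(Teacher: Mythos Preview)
Your proposal is correct and follows essentially the same route as the paper: both introduce the whitened per-sample operator $\zeta(\omega)=(\lambda\I+\A^*\A)^{-1/2}\mathcal{A}(\omega)$ as an $\text{HS}(\CalL^2)$-valued random variable, establish the uniform bound $\norm{\zeta(\omega)}_{\text{HS}}\le \tilde\kappa^2/\sqrt{\lambda}$ and the second-moment bound $\EXP\norm{\zeta}_{\text{HS}}^2\le \tilde\kappa^2\,\N(\lambda)$, and then apply the Pinelis concentration inequality (Lemma~\ref{lem:concentration1}). The only cosmetic difference is in the variance step: the paper expands in the eigenbasis of $\A^*\A$ and uses Cauchy--Schwarz, whereas your trace/Loewner argument $\EXP[\mathcal{A}(\omega)^2]\preceq \tilde\kappa^2\,\A^*\A$ (valid because $\mathcal{A}(\omega)=\mathcal{X}(\omega)\otimes\mathcal{X}(\omega)$ is rank one with $\norm{\mathcal{X}(\omega)}\le\tilde\kappa$) reaches the same conclusion and is arguably cleaner.
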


 \begin{lemma} \label{lem:op_est_2_3}
 For any $\delta \in (0,1)$, with confidence at least $1-\delta$ we have that in case of noise assumption \eqref{eq:noise_1}:
\begin{align} \label{eq:op_est_2}
    \left\|(\lambda \I+\A^* \A)^{-1 / 2}([\A^* \A]_N u^+ -[\A^* Y]_N) \right\|_{\CalL^2} \leq  \frac{\sigma}{\tilde{\kappa} \delta} S(N, \lambda),
\end{align}
whereas in case of \eqref{eq:noise_2}:
\begin{align} \label{eq:op_est_3}
    \left\|(\lambda \I+\A^* \A)^{-1 / 2}([\A^* \A]_N u^+ -[\A^* Y]_N) \right\|_{\CalL^2} \leq  \frac{(M+\sigma) \log (2 / \delta)}{\tilde{\kappa} } S(N,\lambda).
\end{align}
\end{lemma}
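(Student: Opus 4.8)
The plan is to bound the relevant norm by a concentration argument applied to a sum of i.i.d.\ vector-valued random variables, followed by either Chebyshev's inequality (for the weak noise assumption \eqref{eq:noise_1}) or a Bernstein-type inequality (for the stronger moment condition \eqref{eq:noise_2}). First I would observe that, thanks to the noise model in Assumption \ref{as:noise_model}, namely $Y_i = \A_i u^+ + \varepsilon_i$, the difference inside the norm telescopes: writing $A_{l,i}$ for the sample operators built from $X_i$, one has
\begin{align*}
[\A^* \A]_N u^+ - [\A^* Y]_N = \frac1N \sum_{i=1}^N \bigl( \A_i^* \A_i u^+ - \A_i^* Y_i \bigr) = -\frac1N \sum_{i=1}^N \A_i^* \varepsilon_i =: -\frac1N \sum_{i=1}^N \xi_i,
\end{align*}
so that the quantity to control is $\bigl\| \frac1N \sum_{i=1}^N (\lambda \I + \A^*\A)^{-1/2} \xi_i \bigr\|_{\CalL^2}$, a normalized sum of i.i.d.\ centered $\CalL^2$-valued random variables (centered because $\varepsilon_i$ is independent of $X_i$ with mean zero, hence $\EXP \xi_i = \EXP[\A_i^*] \EXP[\varepsilon_i] = 0$; more precisely $\EXP[(\lambda\I+\A^*\A)^{-1/2}\A_i^*\varepsilon_i] = (\lambda\I+\A^*\A)^{-1/2}\A^*\EXP[\varepsilon_i\mid X_i]$ which vanishes after conditioning).

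The next step is to estimate the moments of the single summand $\eta_i := (\lambda \I + \A^*\A)^{-1/2} \xi_i$. Using independence of $\varepsilon_i$ and $X_i$ together with Assumption \ref{ass:unif}, one gets $\EXP\|\eta_i\|^2 \le \sigma^2 \,\EXP\bigl\| (\lambda\I+\A^*\A)^{-1/2}\A_i^* \bigr\|_{\text{HS}}^2$, and the expected Hilbert-Schmidt norm of $(\lambda\I+\A^*\A)^{-1/2}\A_i^*$ is exactly the sort of quantity that, when one takes expectations over $X_i$, produces the effective dimension $\N(\lambda)$ (since $\EXP[\A_i^*\A_i] = \A^*\A$ and $\text{Tr}\bigl((\lambda\I+\A^*\A)^{-1}\A^*\A\bigr) = \N(\lambda)$), plus a crude operator-norm term bounded via $\|\A^*\A\| \le \tilde\kappa^2$ and $\lambda^{-1}$; combining these reproduces the shape of $S(N,\lambda)$. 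For the weak case \eqref{eq:noise_1} I would then simply apply Chebyshev's/Markov's inequality to $\|\frac1N\sum \eta_i\|$: its second moment is $\frac1N \EXP\|\eta_1\|^2 \lesssim \frac{\sigma^2}{N}(\tfrac{\tilde\kappa^2}{N\lambda}+\N(\lambda))$, and taking square roots and dividing by $\delta$ (Markov on the norm) yields a bound of the form $\frac{\sigma}{\tilde\kappa\delta} S(N,\lambda)$ after matching constants. For the subexponential case \eqref{eq:noise_2}, I would instead verify that $\eta_i$ satisfies a Bernstein moment bound $\EXP\|\eta_i\|^{\tilde m} \le \frac12 \tilde m! \, \tilde\sigma^2 \tilde L^{\tilde m - 2}$ with $\tilde L \sim M/\tilde\kappa$ and $\tilde\sigma$ matching the variance proxy above — this follows by inserting \eqref{eq:noise_2} and bounding the operator factor in sup-norm by $\kappa^l/\sqrt\lambda$ type estimates — and then invoke the vector-valued Pinelis–Bernstein inequality to obtain the $\log(2/\delta)$ dependence and the factor $(M+\sigma)$.

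The main obstacle, I expect, is the careful bookkeeping in controlling $\EXP\bigl\|(\lambda\I+\A^*\A)^{-1/2}\A_i^*\bigr\|_{\text{HS}}^2$ and the higher moments of this operator-valued object: one must split off the "variance" part that genuinely sees $\N(\lambda)$ from the coarser part that only sees $\lambda^{-1}\tilde\kappa^2$, and do so in a way that the two contributions assemble into precisely $S(N,\lambda)/\sqrt\lambda$ (equivalently $\sqrt{\Upsilon(\lambda)}$-type quantities) rather than some looser combination — this is exactly the step where one must import and adapt the argument of \cite{guo2017learning} to the multi-index product structure of the operators $A_l$ and the direct-sum space $\CalL^2$. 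Once the single-summand moment bounds are in the Bernstein form, the concentration inequalities are black boxes and the rest is matching constants.
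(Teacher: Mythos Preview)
Your proposal is correct and follows essentially the same route as the paper, which likewise writes the quantity as $\frac1N\sum_i(\lambda\I+\A^*\A)^{-1/2}\mathcal{X}^i\varepsilon_i$ (your $\A_i^*\varepsilon_i$ is exactly the paper's $\mathcal{X}^i\varepsilon_i$), computes the second moment via the eigenbasis of $\A^*\A$ to obtain precisely $\sigma^2\N(\lambda)$, and then invokes the Bernstein-type Lemma~\ref{lem:concentration} for \eqref{eq:noise_2} and Chebyshev for \eqref{eq:noise_1}. One clarification on your ``main obstacle'': no splitting of the variance is needed --- the second moment is exactly $\sigma^2\N(\lambda)$, and the $\tilde\kappa/\sqrt{N\lambda}$ contribution to $S(N,\lambda)$ enters only through the almost-sure bound $\|(\lambda\I+\A^*\A)^{-1/2}\mathcal{X}^i\|_{\CalL^2}\le\tilde\kappa/\sqrt{\lambda}$ feeding the linear term of the Bernstein inequality.
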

We additionally need the following upper bounds (see e.g. \cite{guo2017learning}[Propositon 1]):
\begin{lemma} \label{lem:op_est_4}
For any $\delta \in (0,1)$, with confidence at least $1-\delta$ we have that 
\begin{align} \label{eq:op_est_4}
\left\|(\lambda \I+\A^* \A)\left(\lambda \I+[\A^* \A]_N\right)^{-1}\right\|_{\CalL^2 \to \CalL^2}\leq 2\left[\left(\frac{S(N, \lambda) \log \frac{2}{\delta}}{\sqrt{\lambda}}\right)^2+1\right]=\Xi
\end{align}
\end{lemma}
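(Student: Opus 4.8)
The plan is to reduce everything to the perturbation bound already available in Lemma \ref{lem:op_est_1}. Write $T=\A^*\A$ and $T_N=[\A^*\A]_N$ for brevity, both self-adjoint and positive semidefinite on $\CalL^2$. The starting point is the algebraic identity
\begin{align*}
(\lambda\I+T)(\lambda\I+T_N)^{-1}=\I+(T-T_N)(\lambda\I+T_N)^{-1},
\end{align*}
so that by the triangle inequality it suffices to bound $\norm{(T-T_N)(\lambda\I+T_N)^{-1}}_{\CalL^2\to\CalL^2}$. I would insert the factor $(\lambda\I+T)^{1/2}(\lambda\I+T)^{-1/2}$ in the middle and split the operator norm as
\begin{align*}
\norm{(T-T_N)(\lambda\I+T_N)^{-1}}
\le \norm{(T-T_N)(\lambda\I+T)^{-1/2}}\cdot\norm{(\lambda\I+T)^{1/2}(\lambda\I+T_N)^{-1}}.
\end{align*}
The first factor is controlled by Lemma \ref{lem:op_est_1} (using that $\norm{B}=\norm{B^*}$ to move the square-root factor to the correct side, since $T-T_N$ is self-adjoint), giving $S(N,\lambda)\log(2/\delta)$ on an event of probability at least $1-\delta$.

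The remaining task is the self-referential factor $\norm{(\lambda\I+T)^{1/2}(\lambda\I+T_N)^{-1}}$. Here I would again factor through $(\lambda\I+T)^{1/2}$ and write
\begin{align*}
\norm{(\lambda\I+T)^{1/2}(\lambda\I+T_N)^{-1}}^2
=\norm{(\lambda\I+T)^{1/2}(\lambda\I+T_N)^{-1}(\lambda\I+T)^{1/2}}
=\norm{(\lambda\I+T)^{1/2}(\lambda\I+T_N)^{-1}(\lambda\I+T)^{1/2}},
\end{align*}
recognizing the right-hand operator as essentially $(\lambda\I+T)(\lambda\I+T_N)^{-1}$ up to similarity, so its norm equals the quantity $\Xi$ we are trying to bound. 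Denote $Q:=\norm{(\lambda\I+T)(\lambda\I+T_N)^{-1}}_{\CalL^2\to\CalL^2}$. Combining the two displays yields, on the good event,
\begin{align*}
Q\le 1+\frac{S(N,\lambda)\log(2/\delta)}{\sqrt{\lambda}}\,\sqrt{Q},
\end{align*}
after noting $\norm{(\lambda\I+T)^{-1/2}}\le\lambda^{-1/2}$ to turn the first factor into $S(N,\lambda)\log(2/\delta)/\sqrt\lambda$. This is a quadratic inequality in $\sqrt Q$ of the form $Q-a\sqrt Q-1\le 0$ with $a=S(N,\lambda)\log(2/\delta)/\sqrt\lambda$; solving gives $\sqrt Q\le \tfrac12(a+\sqrt{a^2+4})$, hence $Q\le \tfrac12(a+\sqrt{a^2+4})^2\le a^2+2$ (using $\sqrt{a^2+4}\le a+2$), and finally $Q\le 2(a^2+1)$, which is exactly the claimed bound $\Xi=2[(S(N,\lambda)\log(2/\delta)/\sqrt\lambda)^2+1]$.

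The main obstacle is the circularity: the factor $\norm{(\lambda\I+T)^{1/2}(\lambda\I+T_N)^{-1}}$ cannot be bounded directly and must be handled by the self-improving (bootstrap) argument above, which is why the statement is naturally phrased as a quadratic inequality rather than a direct estimate. A secondary technical point is keeping the confidence accounting honest: all randomness enters only through the single event of Lemma \ref{lem:op_est_1}, so no union bound is needed and the confidence $1-\delta$ is preserved. One should also make sure $\lambda\I+T_N$ is boundedly invertible (immediate, since $T_N\succeq 0$ and $\lambda>0$) so that all the operators written above are well defined. Everything else — the operator identity, moving adjoints, the elementary bound $\sqrt{a^2+4}\le a+2$ — is routine. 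Since Lemma \ref{lem:op_est_4} is quoted from \cite{guo2017learning}, one may alternatively just cite that reference, the operators $T,T_N$ here satisfying the same abstract hypotheses (self-adjoint, positive, Hilbert–Schmidt, with the perturbation bound of Lemma \ref{lem:op_est_1}).
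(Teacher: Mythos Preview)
Your bootstrap strategy is different from the paper's direct route, and while the idea is sound, two steps as written do not hold. First, the identity $\norm{(\lambda\I+T)^{1/2}(\lambda\I+T_N)^{-1}}^2=\norm{(\lambda\I+T)^{1/2}(\lambda\I+T_N)^{-1}(\lambda\I+T)^{1/2}}$ is false: for $C=PR$ with $P=(\lambda\I+T)^{1/2}$, $R=(\lambda\I+T_N)^{-1}$, one has $\norm{C}^2=\norm{CC^*}=\norm{PR^2P}$, not $\norm{PRP}$. What \emph{is} true is $\norm{PR^{1/2}}^2=\norm{PRP}$; combining this with $\norm{PR}\le\norm{PR^{1/2}}\,\norm{R^{1/2}}\le\lambda^{-1/2}\norm{PR^{1/2}}$ repairs the argument and explains where your stray factor $\lambda^{-1/2}$ actually belongs. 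Second, ``similar, hence equal norm'' is wrong in general; $PRP$ and $P^2R$ share only their spectrum. Since $PRP$ is self-adjoint this still gives the inequality you need, $\norm{PRP}=\rho(PRP)=\rho(P^2R)\le\norm{P^2R}=Q$, but it is an inequality, not an equality. With these fixes your quadratic inequality $Q\le 1+a\sqrt{Q}$ with $a=S(N,\lambda)\log(2/\delta)/\sqrt\lambda$ is valid; note also that the solution gives $Q\le\tfrac14(a+\sqrt{a^2+4})^2\le a^2+a+1\le 2(a^2+1)$, not via the intermediate ``$\le a^2+2$'' you wrote (that step is false as stated, though the final bound survives).

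By contrast, the paper avoids the bootstrap entirely via the operator identity
\[
BA^{-1}=(B-A)B^{-1}(B-A)A^{-1}+(B-A)B^{-1}+I,
\]
applied with $B=\lambda\I+T$ and $A=\lambda\I+T_N$. Each term is then bounded directly: using $\norm{(B-A)B^{-1/2}}=\norm{B^{-1/2}(B-A)}\le S(N,\lambda)\log(2/\delta)$ from Lemma~\ref{lem:op_est_1} together with $\norm{A^{-1}}\le\lambda^{-1}$ and $\norm{B^{-1/2}}\le\lambda^{-1/2}$ yields $Q\le a^2+a+1\le 2(a^2+1)$ with no self-reference at all. Your approach ultimately reaches the same numerical bound, but the paper's identity is shorter and sidesteps the two delicate operator-norm facts you stumbled on.
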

As a consequence of the operator concavity of the function $t \mapsto t^{1 / 2}$, we obtain, see e.g. \cite[Lemma A7]{blanchard2010optimal}, that
\begin{align} \label{eq:op_est_5}
\left\|(\lambda \I+\A^* \A)^{1 / 2}\left(\lambda \I+[\A^* \A]_N \right)^{-1 / 2}\right\|_{\CalL^2 \to \CalL^2} \leq \Xi^{1 / 2}.
\end{align}
To prove \eqref{eq:op_est_0}--\eqref{eq:op_est_4} we will also need the following concentration inequalities.
\begin{lemma}[\cite{pinelis1994optimum}]
\label{lem:concentration1}
Let $\Hs$ be a Hilbert space and $\xi$ be a random variable with values in $\Hs$. Assume that $\|\xi\|_{\Hs} \leq M$ almost surely. Let $\left\{\xi_1, \xi_2, \ldots, \xi_N\right\}$ be a sample of $N$ independent observations for $\xi$. Then for any $0<\delta<1$,
\begin{align*}
\left\|\frac{1}{N} \sum_{i=1}^N\left[\xi_i-\EXP(\xi)\right]\right\|_{\Hs} \leq \frac{2 M \log (2 / \delta)}{N}+\sqrt{\frac{2 \EXP \left(\|\xi\|_{\Hs}^2\right) \log (2 / \delta)}{N}}
\end{align*}
with confidence at least $1-\delta$.
\end{lemma}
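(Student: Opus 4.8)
The plan is to prove Lemma~\ref{lem:concentration1} (the Pinelis--Hoeffding-type vector Bernstein inequality) essentially by following the classical moment-generating-function / martingale argument adapted to Hilbert-space-valued random variables, since this is the concentration tool that underlies all the operator estimates \eqref{eq:op_est_0}--\eqref{eq:op_est_4}. Write $S_N = \frac1N\sum_{i=1}^N(\xi_i - \EXP\xi)$ and set $Z_i = \xi_i - \EXP\xi$, so that $\EXP Z_i = 0$ and $\norm{Z_i}_{\Hs}\le 2M$ almost surely. The goal is a tail bound of the form $\mathbb{P}(\norm{S_N}_{\Hs}\ge t)\le 2\exp(-\,\cdot\,)$ which, when inverted at level $\delta$, yields the stated estimate with the two additive terms (the ``Bernstein'' linear-in-$\log(2/\delta)$ term coming from the almost-sure bound, and the ``sub-Gaussian'' square-root term coming from the second moment $v := \EXP\norm{\xi}_{\Hs}^2$, noting $\EXP\norm{Z_i}^2\le \EXP\norm{\xi}^2 = v$).

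First I would reduce the vector-valued tail to a scalar one. The standard device is to bound $\EXP\exp(s\norm{S_N}_{\Hs})$ for $s>0$; using $\norm{S_N}_{\Hs}\le \norm{S_N}_{\Hs}$ together with the fact that for a Hilbert-space martingale $\norm{\cdot}_{\Hs}^2$ is convex, one controls the MGF of $\norm{S_N}_{\Hs}$ via the increments. Concretely, condition successively on $\mathcal{F}_{i-1}=\sigma(Z_1,\dots,Z_{i-1})$ and use the elementary inequality $e^x \le 1 + x + \tfrac{x^2}{2}\,\phi(x)$ (or the Bennett/Bernstein bound $\log\EXP e^{sW}\le \frac{(sM')^2 v'}{2(1 - sM'/3)}$ type estimate, but here with the cruder Hoeffding-type splitting) applied to the scalar increments of $\norm{\cdot}_{\Hs}^2$ along with $\innerpro{Z_i, \text{partial sum}}_{\Hs}$ having zero conditional mean. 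This produces $\EXP\exp(s\norm{S_N}_{\Hs}) \le \exp\!\big(sNa + \tfrac12 s^2 N b\big)$ for appropriate $a$ proportional to $M/N$-scale quantities and $b$ proportional to $v$; Markov's inequality then gives $\mathbb{P}(\norm{S_N}_{\Hs}\ge t)\le \exp(sNa + \tfrac12 s^2 Nb - sNt)$, and optimizing over $s$ (or, more simply, choosing $s$ to balance the two regimes) yields a bound of the form $\exp\!\big(-\tfrac{Nt^2}{C(v + Mt)}\big)$. Setting the right-hand side equal to $\delta/2$ (the factor $2$ absorbing a symmetrization step if one wants a two-sided statement, though here only the norm appears so strictly one copy suffices — I would keep the $2$ to match the cited form) and solving the quadratic for $t$ gives precisely $t \le \frac{2M\log(2/\delta)}{N} + \sqrt{\frac{2v\log(2/\delta)}{N}}$ after using $\sqrt{a+b}\le \sqrt a+\sqrt b$ to split the two contributions.

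Alternatively, and more in keeping with ``the proof is a simple exercise'' spirit, I would simply cite \cite{pinelis1994optimum} directly: the statement is Theorem~3.5 there (or its immediate corollary for i.i.d.\ bounded summands), and it suffices to verify that our $\xi$ satisfies the hypotheses, namely $\norm{\xi}_{\Hs}\le M$ a.s.\ and finite second moment, which are exactly assumed. In that case the ``proof'' is one line: apply Pinelis's inequality for martingales in $(2,D)$-smooth Banach spaces — Hilbert spaces being $(2,1)$-smooth — to the sum $\sum_{i=1}^N Z_i$, divide by $N$, and rewrite the resulting bound using $\EXP\norm{Z}^2 \le \EXP\norm{\xi}^2$.

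The main obstacle, should one insist on a self-contained argument rather than a citation, is the passage from scalar Hoeffding/Bernstein machinery to the Hilbert-space norm: one cannot directly exponentiate $\norm{S_N}_{\Hs}$ increment-by-increment because $\norm{a+b}_{\Hs}$ is not additive in $b$. The clean fix is to exploit the $2$-smoothness inequality $\norm{x+y}_{\Hs}^2 \le \norm{x}_{\Hs}^2 + 2\innerpro{x,y}_{\Hs} + \norm{y}_{\Hs}^2$ (an identity here, which is what makes Hilbert space the easy case) to get a supermartingale from $\exp(s\,g(\norm{S_k}_{\Hs}))$ for a suitable auxiliary function $g$, or equivalently to bound $\EXP\exp(s\norm{S_N}_{\Hs})$ by first bounding $\EXP\norm{S_N}_{\Hs}^{2m}$ for all $m$ via iterated conditioning (each step picking up only a $v/N$ and an $M^2/N^2$ term), then summing the series. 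Carrying out the constant-tracking in this moment recursion so that the final constants come out as $2M$ and $\sqrt{2v}$ exactly is the only genuinely fiddly part; everything else is routine. Since the paper attributes the lemma to \cite{pinelis1994optimum} verbatim, I would present the citation route as the proof and relegate the self-contained derivation to a remark if desired.
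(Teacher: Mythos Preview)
The paper does not prove this lemma at all; it is stated with the attribution \cite{pinelis1994optimum} and used as a black box. Your ``citation route'' is therefore exactly what the paper does, and your additional sketch of a self-contained Pinelis-style argument is extra detail beyond what the paper provides.
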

\begin{lemma}[see e.g. Theorem 3.3.4. in \cite{yurinsky1995sums}] 
\label{lem:concentration}
 Let $\xi$ be a random variable with values in a Hilbert space $\Hs$. Let $\left\{\xi_1, \xi_2, \ldots, \xi_N\right\}$ be a sample of $N$ independent observations for $\xi$.
Furthermore, assume that the bound $\EXP\|\xi\|_{\Hs}^{\tilde{m}} \leqslant \frac{v}{2} \tilde{m} ! u^{\tilde{m}-2}$ holds for every $2 \leqslant \tilde{m} \in \mathbb{N}$, then for any $0<\delta<1$,
\begin{align*}
\left\|\frac{1}{N} \sum_{i=1}^N\left[\xi_i-\mathbb{E}(\xi)\right]\right\|_{\Hs} \leqslant \frac{2 u \log (2 / \delta)}{N}+\sqrt{\frac{2 v \log (2 / \delta)}{N}}
\end{align*}
with confidence at least $1-\delta$.
\end{lemma}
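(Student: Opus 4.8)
The plan is to prove this Bernstein-type deviation inequality for Hilbert-space valued sums by the classical Chernoff/exponential-moment method, in the same spirit as Lemma~\ref{lem:concentration1}. First I would set $\eta_i=\xi_i-\EXP(\xi)$, so that the $\eta_i$ are i.i.d.\ and mean zero, and reduce the statement to a tail bound for $\norm{Z_N}_{\Hs}$ with $Z_N=\sum_{i=1}^N\eta_i$; the claimed estimate is then simply $\norm{Z_N}_\Hs/N\le t/N$ for a suitable $t$. Centering preserves a Bernstein moment condition: from $\norm{\EXP\xi}_\Hs\le\EXP\norm{\xi}_\Hs$ and convexity one gets $\EXP\norm{\eta_i}_\Hs^{\tilde m}\le 2^{\tilde m}\EXP\norm{\xi}_\Hs^{\tilde m}$, hence $\EXP\norm{\eta_i}_\Hs^{\tilde m}\le\frac{\tilde v}{2}\tilde m!\,\tilde u^{\tilde m-2}$ with $\tilde u,\tilde v$ proportional to $u,v$ (and equal to them when the $\xi_i$ are already centered, which is the relevant case in Lemma~\ref{lem:op_est_2_3}; Yurinsky's careful bookkeeping produces the constants displayed in the statement).

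Next I would establish an exponential-moment bound for $\norm{Z_N}_\Hs$, which is the heart of the matter. For a single summand and $0<\lambda<1/\tilde u$ the moment condition gives
\begin{align*}
\sum_{\tilde m\ge 2}\frac{\lambda^{\tilde m}}{\tilde m!}\,\EXP\norm{\eta_i}_\Hs^{\tilde m}\;\le\;\frac{\tilde v\lambda^2}{2}\sum_{\tilde m\ge2}(\lambda\tilde u)^{\tilde m-2}\;=\;\frac{\tilde v\lambda^2}{2(1-\lambda\tilde u)} .
\end{align*}
The delicate point is to lift this control of the $\EXP e^{\lambda\norm{\eta_i}_\Hs}$-type quantities to a bound on $\EXP\exp(\lambda\norm{Z_N}_\Hs)$. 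In a general Banach space this fails; in a Hilbert space it follows from $2$-uniform smoothness (equivalently the parallelogram identity) by an induction on $N$, carried out on the regularized functionals $x\mapsto(\varrho+\norm{x}_\Hs^2)^{1/2}$ in order to cope with the non-differentiability of $\norm{\cdot}_\Hs$ at the origin, and yields
\begin{align*}
\EXP\exp\!\left(\lambda\norm{Z_N}_\Hs\right)\;\le\;2\exp\!\left(\frac{N\tilde v\lambda^2}{2(1-\lambda\tilde u)}\right),\qquad 0<\lambda<1/\tilde u .
\end{align*}
This is precisely the estimate behind Theorem~3.3.4 in \cite{yurinsky1995sums}; I expect reproducing it to be essentially all of the work, and it is the main obstacle — the steps before and after it are mechanical.

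Finally, from the exponential bound Markov's inequality gives, for every $t>0$ and admissible $\lambda$, $\mathbb{P}(\norm{Z_N}_\Hs\ge t)\le 2\exp\bigl(\tfrac{N\tilde v\lambda^2}{2(1-\lambda\tilde u)}-\lambda t\bigr)$, and the choice $\lambda=t/(N\tilde v+\tilde u t)\in(0,1/\tilde u)$ produces the sub-gamma tail $\mathbb{P}(\norm{Z_N}_\Hs\ge t)\le 2\exp\bigl(-t^2/(2(N\tilde v+\tilde u t))\bigr)$. It then remains to equate the right-hand side to $\delta$, i.e.\ to solve $t^2-2\tilde u\log\tfrac2\delta\,t-2N\tilde v\log\tfrac2\delta=0$, which gives
\begin{align*}
t\;=\;\tilde u\log\tfrac2\delta+\sqrt{\tilde u^2\log^2\tfrac2\delta+2N\tilde v\log\tfrac2\delta}\;\le\;2\tilde u\log\tfrac2\delta+\sqrt{2N\tilde v\log\tfrac2\delta}
\end{align*}
by $\sqrt{a+b}\le\sqrt a+\sqrt b$; dividing through by $N$ then yields the asserted bound, with $\tilde u=u$ and $\tilde v=v$.
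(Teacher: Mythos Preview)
The paper does not prove this lemma at all: it is stated with the citation ``see e.g.\ Theorem~3.3.4 in \cite{yurinsky1995sums}'' and then used as a black box in the proof of Lemma~\ref{lem:op_est_2_3}. So there is no ``paper's own proof'' to compare against; you have supplied what the paper deliberately omitted.

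Your outline is the standard route to such Hilbert-space Bernstein inequalities and is essentially how the cited result is obtained: Bernstein moment control on the summands, an exponential-moment bound for the norm of the sum via the $2$-smoothness of the Hilbert norm (this is indeed the nontrivial step, and Pinelis--Yurinsky arguments are the canonical way to do it), then Chernoff and inversion. One caveat worth flagging: your centering step loses a factor through $\EXP\norm{\eta_i}^{\tilde m}\le 2^{\tilde m}\EXP\norm{\xi}^{\tilde m}$, which would produce $\tilde u=2u$, $\tilde v=4v$ rather than $u,v$; you wave this away by invoking ``Yurinsky's careful bookkeeping'', but if you actually want the displayed constants you should either (i) note that in the paper's only application the variables are already centered so the issue is moot, or (ii) run the exponential-moment induction directly on the uncentered $\xi_i$ and subtract the mean at the level of the martingale increments, which is what avoids the loss. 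Apart from this constant-tracking issue your sketch is sound.
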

\begin{proof}[Proof of Lemma \ref{lem:op_est_0}]
 Consider the matrix of operators
\begin{align*} 
\mathcal{A}(\omega)&=\left\{ \mathcal{A}_{k,l}(\omega):  L^2_l \to L^2(\Omega, \mathbb{P}) \otimes L^2_k, k,l=0,...,p \right\},
\end{align*}
where $\mathcal{A}_{0,0}u(\omega) =u,\;\mathcal{A}_{0,l}u (\omega) = (A_l u)(\omega)$,
\begin{align}
\mathcal{A}_{k,l}u(\omega,s_1,...,s_k)
&=\prod_{j=1}^k X(\omega, s_j) \int_{\I^l} u(\tilde{s}_1,...,\tilde{s}_l) \prod_{m=1}^l X(\omega, \tilde{s}_m) d \tilde{s}_m  \label{eq:def:calA_entries}\\
k,l&=1,...,p, \omega \in \Omega \nonumber.
\end{align} 
Then in the spirit of Lemma \ref{lem:concentration1}, the operators $ \mathcal{A}^i$, $i=1,...,N$, defined by using  $X_i(\cdot)$ instead of $X(\omega, \cdot)$ in the above formulas, can be seen as independent observations of $\mathcal{A}(\omega)$.
 
It is clear that $\EXP(\mathcal{A}(\omega))=\A^* \A$ 
and that $\norm{\mathcal{A}(\omega)}_{\text{HS}(\CalL^2)} \le \tilde{\kappa}^2$, so that $\mathcal{A}(\omega)$ is a random variable in $\text{HS}(\CalL^2)$. It remains to apply Lemma \ref{lem:concentration1} in a straightforward way (setting $\xi(\omega)=\mathcal{A}(\omega)$, $\xi_i=\mathcal{A}^i$ and observing that $\EXP(\norm{\mathcal{A}(\omega)}_{\text{HS}(\CalL^2)}^2) \le \tilde{\kappa}^4$).

\end{proof}
\begin{proof}[Proof of Lemma \ref{lem:op_est_1}] 


 Consider the random variable $\xi(\omega)= (\A^* \A+ \lambda \mathbb{I} )^{-\frac12} \mathcal{A}(\omega)$ taking values in $\text{HS}(\CalL^2)$ (we will give a bound on the HS-norm below). Again in the spirit of Lemma \ref{lem:concentration1}, the operators $\xi_i= (\A^* \A+ \lambda \mathbb{I} )^{-\frac12} \mathcal{A}^i$, $i=1,...,N$
 can be seen as independent observations of $\xi(\omega)$. Also recall from Lemma \ref{lem:op_est_0} that $\EXP(\mathcal{A}(\omega))=\A^* \A$ and that $\norm{\mathcal{A}(\omega)}_{\text{HS}(\CalL^2)} \le \tilde{\kappa}^2$.

Then we observe $\EXP (\xi(\omega)) = (\lambda \I+\A^* \A )^{-1 / 2}\A^* \A$. Moreover it is evident from the spectral calculus for self-adjoint operators that
\begin{align*}
    \norm{\xi(\omega)}_{\text{HS}(\CalL^2)}&\le  \norm{(\A^* \A+ \lambda \mathbb{I} )^{-\frac12}}_{\CalL^2 \to \CalL^2} \norm{\mathcal{A}(\omega)}_{\text{HS}(\CalL^2)}  \le \frac{\tilde{\kappa}^2}{\sqrt{\lambda}}.
\end{align*}
Let now $\left\{ \varphi_m \right\}_m$ be an orthonormal basis in $\CalL^2$ that contains the eigenvectors of $\A^* \A$ corresponding to eigenvalues $\left\{ \sigma_m \right\}_m$. Then
\begin{align}
    \norm{\xi(\omega)}_{\text{HS}(\CalL^2)}^2&=\sum_m \norm{(\A^* \A+ \lambda \mathbb{I} )^{-\frac12} \mathcal{A}(\omega) \varphi_m}_{\CalL^2}^2 \nonumber \\
    &=\sum_{m,n}\innerpro{(\A^* \A+ \lambda \mathbb{I} )^{-\frac12} \mathcal{A}(\omega) \varphi_m,\varphi_n}_{\CalL^2}^2 \nonumber \\
    &=  \sum_{m,n} \innerpro{ \mathcal{A}(\omega) \varphi_m,(\A^* \A+ \lambda \mathbb{I} )^{-\frac12} \varphi_n}_{\CalL^2}^2 \nonumber \\
    &= \sum_{m,n} \frac{\innerpro{ \mathcal{A}(\omega) \varphi_m, \varphi_n}_{\CalL^2}^2}{\lambda+\sigma_n}.   \label{eq:xi_est}
\end{align}
Taking expectation in \eqref{eq:xi_est} and using the Cauchy-Schwarz inequality we obtain:
\begin{align*}
    \EXP (\norm{\xi(\omega)}_{\text{HS}(\CalL^2)}^2) 
    &\le  \EXP \left( \sum_{m,n}  \norm{\mathcal{A}(\omega)}_{\text{HS}(\CalL^2)} \norm{\varphi_m}_{\CalL^2} \norm{\varphi_n}_{\CalL^2}\frac{\innerpro{\mathcal{A}(\omega) \varphi_m,\varphi_n}_{\CalL^2}}{\lambda+\sigma_n} \right) \\
    &\le   \tilde{\kappa}^2 \EXP \left( \sum_{m,n}   \frac{\innerpro{\mathcal{A}(\omega) \varphi_m,\varphi_n}_{\CalL^2}}{\lambda+\sigma_n} \right)
    =\tilde{\kappa}^2  \sum_{m,n}   \frac{\innerpro{\EXP(\mathcal{A}(\omega)) \varphi_m,\varphi_n}_{\CalL^2}}{\lambda+\sigma_n}\\    &=\tilde{\kappa}^2 \sum_n \frac{\sigma_n}{\lambda+\sigma_n}=   \tilde{\kappa}^2 \mathcal{N}(\lambda).
\end{align*}
Now the application of  Lemma \ref{lem:concentration1} for $\xi(\omega)= (\A^* \A+ \lambda \mathbb{I} )^{-\frac12} \mathcal{A}(\omega)$ and $\xi_i= (\A^* \A+ \lambda \mathbb{I} )^{-\frac12} \mathcal{A}^i$ yields the desired bound.
\end{proof}

\begin{proof} [Proof of Lemma \ref{lem:op_est_2_3}]
Let us first focus on the more involved estimate \eqref{eq:op_est_3}. The estimate \eqref{eq:op_est_2} can be proven by similar reasoning. Consider $\mathcal{X}(\omega) \in L^2(\Omega,\mathbb{P} ) \otimes \CalL^2$,
\begin{align} \label{eq:def:calX}
\mathcal{X}(\omega)&= (\mathcal{X}_k(\omega))^p_{k=0}, \; \;\mathcal{X}_0(\omega)=1, \; \; \mathcal{X}_k(\omega)= \prod_{j=1}^k X(\omega,s_j), \; \; k=1,...,p,
\end{align}
with $\norm{\mathcal{X}(\omega)}_{\CalL^2} \le \tilde{\kappa}$, and the $\CalL^2$-valued random variable 

\begin{align*}
\xi(\omega)&=
(\lambda+\A^* \A)^{-\frac12}\mathcal{X}(\omega)(Y(\omega) - \A(\omega)u^+) 
=(\lambda+\A^* \A)^{-\frac12}\mathcal{X}(\omega) \varepsilon(\omega),
\end{align*}
where the last equality is due to Assumption \ref{as:noise_model}. 
Then in the spirit of Lemma \ref{lem:concentration}, the functions 
\begin{align*}
\xi_i= (\A^* \A+ \lambda \mathbb{I} )^{-\frac12}(\X^i Y_i -\mathcal{A}^i u^+),
\end{align*}
where $\mathcal{X}^i$ are defined by using  $X_i(\cdot)$ instead of $X(\omega, \cdot)$ in \eqref{eq:def:calX}, can be seen as independent observations of $\xi(\omega)$. Moreover we have:
\begin{align*}
\frac1N \sum_{i=1}^N \xi_i=(\A^* \A+ \lambda \mathbb{I} )^{-\frac12}\left( \frac1N \sum_{i=1}^N \X^i Y_i-\frac1N \sum_{i=1}^N \mathcal{A}^i u^+ \right),
\end{align*}
so that for $k=0,...,p$ recalling \eqref{eq:def:a*y_emp}:
\begin{align*}
\left(\frac1N \sum_{i=1}^N \X^i Y_i \right)_k(s_1,...,s_k)= \frac1N \sum_{i=1}^N  Y_i \prod_{j=1}^k X_i(s_j)=[A_k^* Y]_N(s_1,...,s_k)
\end{align*}
and recalling \eqref{eq:def_a*a_empirical}:
\begin{align*}
&\left(\frac1N \sum_{i=1}^N \mathcal{A}^i u^+ \right)_k (s_1,...,s_k) \\
&=\frac1N \sum_{i=1}^N  \sum_{l=0}^p \prod_{j=1}^k X_i(s_j) \int_{\I^l} u_l^+(\tilde{s}_1,...,\tilde{s}_l) \prod_{m=1}^l X_i(\tilde{s}_m) d\mu(\tilde{s}_m) \\
&=([\A^* \A]_N u^+)_k(s_1,...,s_k),
\end{align*}
which allows us to conclude:
\begin{align*}
\frac1N \sum_{i=1}^N \xi_i=(\lambda \I+\A^* \A)^{-1 / 2}([\A^* Y]_N-[\A^* \A]_N u^+).
\end{align*}
Due to Assumption \ref{as:noise_model} we have
\begin{align*}
\EXP(\xi(\omega))= \EXP((\lambda+\A^* \A)^{-\frac12}\mathcal{X}(\omega))\EXP(\varepsilon(\omega))=0,
\end{align*}
and moreover, due to \eqref{eq:CalApdef} for any $u \in \CalL^2$, $\omega \in \Omega$, 
\begin{align} \label{eq:subresult_calb}
\mathcal{X}(\omega) \innerpro{\mathcal{X}(\omega),u}_{\CalL^2}=\mathcal{X}(\omega) \sum_{l=0}^p \innerpro{\mathcal{X}_l(\omega),u_l}_{L^2_l}
=\mathcal{A}(\omega)u.
\end{align}
To provide more details on \eqref{eq:subresult_calb}, let us compute the entries of $\mathcal{A}(\omega)u \in \CalL^2$ explicitly, so that we have that for any $k=0,...,l$ by \eqref{eq:def:calA_entries}:
\begin{align*}
(\mathcal{A}u)_k(\omega, s_1,...,s_k)
&=\sum_{l=0}^p (\mathcal{A}_{kl} u_l)(\omega, s_1,...,s_k) \\
&=\sum_{l=0}^p \prod_{j=1}^k X(\omega, s_j) \int_{\I^l} u_l(\tilde{s}_1,...,\tilde{s}_l) \prod_{m=1}^l X(\omega, \tilde{s}_m) d \tilde{s}_m\\
&=\prod_{j=1}^k X(\omega, s_j) \sum_{l=0}^p \innerpro{\prod_{m=1}^l X(\omega, \tilde{s}_m), u_l}_{L^2_l}\\
&=\X_k(\omega, s_1,...,s_k) \sum_{l=0}^p \innerpro{\X_l(\omega), u_l}_{L^2_l}\\
&=\X_k(\omega, s_1,...,s_k)  \innerpro{\X(\omega), u}_{\CalL^2}.
\end{align*}
Next using the spectral calculus for self-adjoint operators:
\begin{align*}
    \norm{(\lambda+\A^* \A)^{-\frac12}\mathcal{X}(\omega)}_{\CalL^2} &\le  \norm{(\A^* \A+ \lambda \mathbb{I} )^{-\frac12}}_{\CalL^2 \to \CalL^2}   \norm{\mathcal{X}(\omega)}_{ \CalL^2}  
    \le \frac{\tilde{\kappa}}{\sqrt{\lambda}}.
\end{align*}

Now let $\left\{ \varphi_m \right\}_m$ be an orthonormal basis of $\CalL^2$ consisting of eigenvectors of $\A^* \A$ with corresponding eigenvalues $\left\{ \sigma_m \right\}_m$. Then we can compute the following:
\begin{align}
    &\norm{(\lambda+\A^* \A)^{-\frac12}\mathcal{X}(\omega)}_{\CalL^2}^2 \nonumber \\
    &= \sum_{m} \innerpro{(\A^* \A+ \lambda \mathbb{I} )^{-\frac12} \mathcal{X}(\omega),\varphi_m}_{\CalL^2}^2 \nonumber \\
    &=\sum_{m} \innerpro{\mathcal{X}(\omega),(\A^* \A+ \lambda \mathbb{I} )^{-\frac12} \varphi_m}_{\CalL^2}^2 = \sum_{m} \frac{\innerpro{\mathcal{X}(\omega), \varphi_m}_{\CalL^2}^2}{\sigma_m+\lambda}.\label{eq:subresult}
\end{align}

Taking expectation in \eqref{eq:subresult} and using \eqref{eq:subresult_calb} we can deduce:
\begin{align*}
&\EXP\left( \sum_{m} \frac{\innerpro{\mathcal{X}(\omega) \innerpro{\mathcal{X}(\omega) , \varphi_m}_{\CalL^2} , \varphi_m}_{\CalL^2}}{\sigma_m+\lambda} \right)=\EXP\left( \sum_{m} \frac{\innerpro{ \mathcal{A}(\omega) \varphi_m,  \varphi_m}_{\CalL^2}}{\sigma_m+\lambda} \right)\\&= \sum_{m} \frac{\innerpro{  \EXP( \mathcal{A}(\omega)) \varphi_m, \varphi_m}_{\CalL^2}}{\sigma_m+\lambda}
=\sum_{m} \frac{\norm{ \varphi_m}_{\CalL^2}^2 \sigma_m}{\sigma_m+\lambda} = \mathcal{N}(\lambda).
\end{align*}
By independence of $\varepsilon$ and $\mathcal{X}$ this allows to conclude for any $\omega \in \Omega$:
\begin{align*}
\EXP(\norm{\xi}^{\tilde{m}}_{\CalL^2}) &\le  \norm{(\lambda+\A^* \A)^{-\frac12}\mathcal{X}(\omega)}_{ \CalL^2}^{\tilde{m}-2} \\
&\cdot \EXP\left( \norm{(\lambda+\A^* \A)^{-\frac12}\mathcal{X}(\omega)}_{\CalL^2}^2\right) \EXP(|\varepsilon(\omega)|^{\tilde{m}}) \\
&\le \frac{\sigma^2 \mathcal{N}(\lambda)}{2} \left( \frac{M\tilde{\kappa}}{\sqrt{\lambda}}\right)^{\tilde{m}-2} \tilde{m}!.
\end{align*}
Now the application of  Lemma \ref{lem:concentration} for $\xi(\omega)$ and $\xi_i$ yields the desired bound \eqref{eq:op_est_3}. 

To obtain \eqref{eq:op_est_2} we need to follow the same lines of proof, but only consider the case $\tilde{m}=2$ and afterward apply Tschebyshev's inequality instead.
\end{proof}

\begin{proof}[Proof of Lemma \ref{lem:op_est_4}]
Using the relation 

\begin{align*}
    BA^{-1}=(B-A)B^{-1}(B-A)A^{-1}+(B-A)B^{-1}+I
\end{align*}
for the product of invertible operators $A$ and $B$ on Banach spaces, together with \eqref{eq:op_est_1} and the bounds $\norm{([\A^* \A]_N+ \lambda \I )^{-\frac12}}_{\CalL^2 \to \CalL^2} \le \frac{1}{\sqrt{\lambda}}$, $\norm{(\A^* \A+\lambda \I)^{-\frac12}}_{\CalL^2 \to \CalL^2} \le \frac{1}{\sqrt{\lambda}}$ (which follow from the spectral theorem), we get (using operator concavity of $t \mapsto \sqrt{t}$):
\begin{align*} 
    &\norm{\underbrace{(\A^* \A+\lambda \I)}_{B} \underbrace{([\A^* \A]_N+ \lambda \I )^{-1}}_{A^{-1}}}_{\CalL^2 \to \CalL^2}   \\
    &\le \norm{([\A^* \A]_N+ \lambda \I )^{-\frac12}}^2_{\CalL^2 \to \CalL^2} \cdot \norm{(\A^* \A+ \lambda \mathbb{I} )^{-\frac12}(\A^* \A-[\A^* \A]_N)}^2_{\CalL^2 \to \CalL^2} \\
    &+\norm{(\A^* \A+\lambda \I)^{-\frac12}}_{\CalL^2 \to \CalL^2} \cdot \norm{(\A^* \A+ \lambda \mathbb{I} )^{-\frac12}(\A^* \A-[\A^* \A]_N)}_{\CalL^2 \to \CalL^2} +1 \\
    &\le \frac{1}{\lambda}\norm{(\A^* \A+ \lambda \mathbb{I} )^{-\frac12}(\A^* \A-[\A^* \A]_N)}^2_{\CalL^2 \to \CalL^2}\\
    &+\frac{1}{\sqrt{\lambda}}\norm{(\A^* \A+ \lambda \mathbb{I} )^{-\frac12}(\A^* \A-[\A^* \A]_N)}_{\CalL^2 \to \CalL^2} +1 \nonumber \le 2\left[\left(\frac{S(N, \lambda) \log \frac{2}{\delta}}{\sqrt{\lambda}}\right)^2+1\right]
\end{align*}
with confidence at least $1-\delta$.
\end{proof}

\subsection{Statement and proof of main result}

In Subsection \ref{subsec:notation} we have already mentioned the necessity to employ a regularization in the context of PFR to find an approximation for the solution of \eqref{eq:noise_model} and started with considering Tikhonov regularization. In this section we want to formulate and prove our main result for even more general regularization scheme covering Tikhonov regularization as a particular case. That scheme is attributed to A. B. Bakushinskii (1967) and we introduce it here as follows (see, for example, \cite{lu2020balancing, mathe2003geometry}).
\begin{definition} \label{def:gen_reg}
A one-parametric family of functions $\{g_\lambda\}$ is called a regularization scheme for a bounded self-adjoint operator $B$ in a Hilbert space $\Hs$, if there are constants $\gamma_0, \gamma_{-1 / 2}$ and $\gamma_{-1}$ such that for any $\lambda$  meeting the inequalities $0<\lambda \leq \norm{B}_{\Hs \to \Hs}$ the following bounds hold: 
\begin{align}
\sup _{0<\sigma \le \norm{B}_{\Hs \to \Hs}} \sqrt{\sigma}\left|g_\lambda(\sigma)\right| \leq \frac{\gamma_{-1 / 2}}{\sqrt{\lambda}}  \label{eq:g_lam_sqrt_est} \\
\sup _{0<\sigma \le \norm{B}_{\Hs \to \Hs} }\left|g_\lambda(\sigma)\right| \leq \frac{\gamma_{-1}}{\lambda} \label{eq:g_lam_est} \\
\sup _{0<\sigma \le \norm{B}_{\Hs \to \Hs}}\left|1-\sigma g_\lambda(\sigma)\right| \leq \gamma_0 \label{eq:r_lam_est}
\end{align}

The \textit{qualification of the regularization scheme} $\left\{g_\lambda\right\}$ is the maximum value $q$ 
such that for some positive constant $\gamma_q$ it holds
\begin{align} \label{eq:qual_reg_scheme}
\sup _{0<\sigma \le \norm{B}_{\Hs \to \Hs}}\left|r_\lambda(\sigma) \sigma^q\right| \leq \gamma_q \lambda^q,
\end{align}
where
\begin{align*}
r_\lambda(\sigma)=1-\sigma g_\lambda(\sigma)
\end{align*}
is the so-called residual function. 
\end{definition}
Let us remark that one can also immediately deduce 
\begin{align} \label{eq:sigma_g_lam_est}
\sup _{0<\sigma \le \norm{B}_{\Hs \to \Hs}}\left|\sigma g_\lambda(\sigma)\right| \leq \gamma_0+1
\end{align}
from \eqref{def:gen_reg}.
Note that Tikhonov regularization \eqref{eq:tikpftfr}, \eqref{eq:tikpftfremp} fulfils Definition \ref{def:gen_reg} with $g_\lambda(\sigma)=(\sigma-\lambda)^{-1}$, 
and $q=1$, 
while iterated Tikhonov regularization with $\tilde{q}$ iterations corresponds to $g_\lambda(\sigma) = g_{\lambda,\tilde{q}}(\sigma)=\frac{1}{\sigma}(1-(\frac{\lambda}{\lambda+\sigma})^{\tilde{q}})$ and $q=\tilde{q}$.

Recall that under Assumption \ref{eq:ass:projection} the minimizer $u = u^+=(u_0^+,...,u_p^+)$ of \eqref{eq:pftfr} solves $\A^* \A u=\A^* Y$. Then in view of Definition \ref{def:gen_reg} a regularized approximate solution of the later equation can be written in the form
\begin{align*} 
    u^N_{\lambda}=g_{\lambda}([\A^* \A]_N)[\A^* Y]_N
\end{align*}
In order to account for the smoothness of the target vector function $u^+$, we will use the concept of general source conditions indexed by operator monotone functions in the context of learning (see, e.g., \cite{mathe2006regularization,bauer2007regularization} or \cite{lu2013regularization}[Section 2.8.2.] for an extended discussion), and thus need to introduce several notions:
\begin{definition}
A continuous non-decreasing function $\varphi: [0,a] \to \Real^+$ is called \textit{index function} if $\varphi(0)=0$.
An index function is called operator monotone if for any bounded non-negative self-adjoint operators $A,B$ on a Hilbert space $\mathcal{H}$ with 
\newline
$\norm{A}_{{\Hs \to \Hs}}, \norm{B}_{{\Hs \to \Hs}} \le a$, we have that $A \prec B$ implies $\varphi(A) \prec \varphi(B)$; here the symbol $\prec$ accounts for the corresponding inequalities of the associated quadratic forms, i.e., the relation $A \prec B$ means, for example, that for any $x$ from $\mathcal{H}$ it holds $\innerpro{Ax,x}_{\mathcal{H}}\le\innerpro{Bx,x}_{\mathcal{H}}$.
\end{definition}
\begin{definition}
We say that an index function $\varphi: [0,a] \to \Real^+$ is covered by the qualification $q$ if the function $\sigma \mapsto \sigma^{q}/\varphi(\sigma)$ is non-decreasing on $[0,a]$.
\end{definition}
The following well known result tells us that operator monotone index functions are covered by qualification $1$:
\begin{lemma}[see e.g. Lemma 3 in \cite{mathe2006regularization} or Lemma 3.4. \cite{lu2020balancing}] \label{lem:op_mon}
For each operator monotone index function $\varphi$ there is a constant $1 \le C < \infty$ such that, whenever $0<s\le t \le \tilde{\kappa}^2$, we have
$\frac{\varphi(t)}{t} \le C \frac{\varphi(s)}{s}$.
\end{lemma}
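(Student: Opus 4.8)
The plan is to read the estimate off from the L\"owner (Nevanlinna) integral representation of operator monotone functions together with an elementary term-by-term comparison; alternatively one may simply invoke the cited results.

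First I would record the representation. Since $\varphi$ is an operator monotone index function on $[0,a]$ (so $\varphi(0)=0$ and $\varphi\ge 0$), one form of L\"owner's theorem provides a constant $\beta\ge 0$ and a positive Borel measure $\mu$ on $\Real\setminus(0,a)$ with $\mu(\{0\})=0$ (there can be no mass at $0$, as $\varphi$ is finite there) such that
\begin{align*}
\varphi(t)=\beta t+\int_{\Real\setminus(0,a)}\frac{t}{\sigma(\sigma-t)}\,d\mu(\sigma),\qquad t\in[0,a];
\end{align*}
the would-be additive constant vanishes because $\varphi(0)=0$, and every integrand is nonnegative on $(0,a)$ since $\sigma(\sigma-t)>0$ both for $\sigma\ge a>t$ and for $\sigma<0<t$ (in the latter case $\sigma(\sigma-t)=|\sigma|(|\sigma|+t)$). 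Dividing by $t$ and splitting $\mu=\mu_-+\mu_+$ according to $\sigma<0$ and $\sigma\ge a$,
\begin{align*}
\frac{\varphi(t)}{t}=\underbrace{\beta+\int\frac{d\mu_-(\sigma)}{|\sigma|(|\sigma|+t)}}_{=:\,h_-(t)}+\underbrace{\int\frac{d\mu_+(\sigma)}{\sigma(\sigma-t)}}_{=:\,h_+(t)},
\end{align*}
where $h_-$ is non-increasing and $h_+$ is non-decreasing on $(0,a]$, and $h_+$ is bounded on $(0,\tilde\kappa^2]$ because $\varphi(\tilde\kappa^2)<\infty$; moreover $h_+(0^+)=\int\sigma^{-2}\,d\mu_+(\sigma)$ is either $0$ (exactly when $\mu_+=0$, i.e.\ $h_+\equiv 0$) or strictly positive and finite.

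Now I would fix $0<s\le t\le\tilde\kappa^2$. Monotonicity of $h_-$ gives $h_-(t)\le h_-(s)$. For $h_+$, either it is identically $0$, or $h_+(s)\ge h_+(0^+)>0$ and hence $h_+(t)\le h_+(\tilde\kappa^2)=\big(h_+(\tilde\kappa^2)/h_+(0^+)\big)h_+(0^+)\le C'\,h_+(s)$ with $C':=h_+(\tilde\kappa^2)/h_+(0^+)<\infty$ (and if $\mathrm{supp}\,\mu_+\subseteq[\rho,\infty)$ for some $\rho>\tilde\kappa^2$ one even gets the explicit $C'\le \rho/(\rho-\tilde\kappa^2)$ from $\frac{\sigma-s}{\sigma-t}\le\frac{\sigma}{\sigma-\tilde\kappa^2}$). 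Adding the two bounds,
\begin{align*}
\frac{\varphi(t)}{t}=h_-(t)+h_+(t)\le h_-(s)+C'h_+(s)\le \max(1,C')\big(h_-(s)+h_+(s)\big)=\max(1,C')\,\frac{\varphi(s)}{s},
\end{align*}
so the claim holds with $C:=\max(1,C')\in[1,\infty)$.

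\textbf{The main obstacle} is purely the L\"owner representation itself: passing from the operator inequality defining operator monotonicity to a scalar integral formula is the one genuinely non-elementary ingredient, and is precisely why the result is quoted from the literature; everything afterwards --- the sign bookkeeping and the term-by-term comparison --- is routine. It is worth stressing that $C=1$ is \emph{not} admissible in general (in contrast to the case of \emph{concave} index functions, where $t\mapsto\varphi(t)/t$ is non-increasing): for instance $\varphi=\tan$ is operator monotone and an index function on $[0,b]$ for every $b<\pi/2$, yet it is convex there, so $\varphi(t)/t$ is increasing --- which is exactly why the statement is formulated with a general constant $C\ge 1$.
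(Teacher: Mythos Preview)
The paper does not prove this lemma; it is simply quoted from the literature (Math\'e--Pereverzev, Lemma~3, and Lu et al., Lemma~3.4) with no argument supplied, so there is nothing in the paper itself to compare your proof against. Your L\"owner/Nevanlinna-representation argument is correct and is precisely the route those cited references take: once $\varphi(t)/t=h_-(t)+h_+(t)$ is split into a non-increasing part (the linear term together with the mass on $(-\infty,0)$) and a non-decreasing bounded part (the mass on $[a,\infty)$), the chain $h_+(t)\le h_+(\tilde\kappa^2)=C'\,h_+(0^+)\le C'\,h_+(s)$ together with $h_-(t)\le h_-(s)$ yields the claim with $C=\max(1,C')$. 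The one step that deserves an extra sentence is the passage from the canonical Nevanlinna kernel $\frac{1}{\sigma-t}-\frac{\sigma}{1+\sigma^2}$ to your kernel $\frac{t}{\sigma(\sigma-t)}$: this absorbs a constant whose finiteness near $\sigma=0^-$ is not automatic for a general Pick function, and it is exactly the continuity of $\varphi$ at $0$ with $\varphi(0)=0$ (the index-function hypothesis) that secures it. Your $\tan$ example is well chosen and explains why a constant $C>1$ is genuinely required on a finite interval, in contrast to the half-line case where operator monotone index functions are concave and $C=1$ would suffice.
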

Now we can state the smoothness assumption as follows:
\begin{assumption} \label{ass:smoothness}
For some operator monotone index function $\varphi: [0,\tilde{\kappa}^2] \to \Real^+$ we have that
\begin{align*} 
    u^+=\varphi(\A^* \A) v^+
\end{align*}
for some $v^+ \in  \CalL^2$.

\begin{remark}
Note that when $p=1$ (linear functional regression), the operator $\A^* \A$ is connected with the covariance of $X$ with itself, while for $p>1$, $\A^* \A$ is an example of the so-called high-order statistics, which use the third or higher power of $X$. There is an extensive literature on such statistics, see e.g. \cite{swami1997bibliography}.
\end{remark}

\end{assumption}


Our main goal is to estimate the excess risk, which 
in view of $\EXP(\varepsilon)=0$ and \eqref{eq:noise_model}, \eqref{eq:norm_comparisons1}, can be expressed in the $\CalL^2$-norm as follows:
\begin{align*} 
\mathcal{E}(u^+)-\mathcal{E}(u^N_{\lambda})=\norm{\A (u^+ - u^N_{\lambda})}_{L^2(\Omega, \mathbb{P}) }^2=\norm{\sqrt{\A^* \A} (u^+ - u^N_{\lambda})}_{\CalL^2}^2.
\end{align*}
Let us furthermore introduce the auxiliary approximant
\begin{align*} 
\bar{u}^N_{\lambda}=g_{\lambda}([\A^* \A]_N)[\A^* \A]_N u^+,
\end{align*}
such that
\begin{align*} 
  \norm{u^+ - u^N_{\lambda}}_{ \CalL^2}  \le& \underbrace{\norm{u^+-\bar{u}^N_{\lambda}}_{\CalL^2}}_{\text{Approximation Error}}+\underbrace{\norm{\bar{u}^N_{\lambda}- u^N_{\lambda}}_{\CalL^2}}_{\text{Propagation Error}},
\end{align*}
and similar for the excess risk.

The analysis below is performed using arguments similar to the ones employed in \cite{lu2020balancing}, but since the context here is different from \cite{lu2020balancing}, we present the main steps for the reader's convenience.
Let us start with a bound on the approximation error:
\begin{lemma} \label{lem:approx}
   Under the Assumptions \ref{ass:unif}, \ref{eq:ass:projection} and \ref{ass:smoothness} for a sufficiently high qualification of the regularization scheme $q \geq \frac32$ with probability $1-\delta$ we have the following bounds for the approximation error:

   \begin{align} \label{eq:bound_approx_hs_1}
  \norm{u^+ - \bar{u}^N_{\lambda}}_{ \CalL^2} \le  C(\gamma_0+\gamma_1)  \Xi \norm{v^+}_{ \CalL^2}  \varphi(\lambda),
\end{align}
\begin{align} \label{eq:bound_approx_excess2}
  \norm{\A(u^+ - \bar{u}^N_{\lambda})}_{L^2(\Omega, \mathbb{P}) }\le  C \sqrt{8}(\gamma_0+\gamma_{\frac32})  \Xi^{\frac32} \norm{v^+}_{ \CalL^2} \sqrt{\lambda} \varphi(\lambda),
\end{align}
for $0\le \lambda \le \tilde{\kappa}^2$ and $C$ from Lemma \ref{lem:op_mon}.
\end{lemma}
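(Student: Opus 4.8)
\textbf{Proof strategy for Lemma~\ref{lem:approx}.}
The plan is to write the approximation error $u^+ - \bar{u}^N_\lambda$ entirely in terms of the residual function $r_\lambda$ of the scheme applied to the \emph{empirical} operator, then transport the smoothness encoded with respect to $\A^*\A$ over to $[\A^*\A]_N$ using the operator-norm comparison estimates \eqref{eq:op_est_4}--\eqref{eq:op_est_5}. Concretely, since $\bar{u}^N_\lambda = g_\lambda([\A^*\A]_N)[\A^*\A]_N u^+$, we have $u^+ - \bar{u}^N_\lambda = r_\lambda([\A^*\A]_N) u^+$, and by Assumption~\ref{ass:smoothness} this equals $r_\lambda([\A^*\A]_N)\varphi(\A^*\A) v^+$. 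The first task is to replace $\varphi(\A^*\A)$ by $\varphi([\A^*\A]_N)$ at the cost of an operator-monotone comparison: insert $(\lambda\I + \A^*\A)^{1/2}(\lambda\I+[\A^*\A]_N)^{-1/2}$ and its inverse so that Lemma~\ref{lem:op_est_4}/\eqref{eq:op_est_5} supply the factor $\Xi$ (or $\Xi^{1/2}$), and use operator monotonicity of $\varphi$ together with Lemma~\ref{lem:op_mon} to bound $\varphi(\A^*\A)$ by $C\varphi([\A^*\A]_N)$ up to lower-order terms controlled by $\lambda$. This is the standard ``change of anchor operator'' step.

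Once everything is expressed through $[\A^*\A]_N$, I would invoke the spectral calculus bounds for the scheme. For \eqref{eq:bound_approx_hs_1} I need $\norm{r_\lambda([\A^*\A]_N)\varphi([\A^*\A]_N)}_{\CalL^2 \to \CalL^2}$; since $\varphi$ is covered by qualification $q$ with $q \ge 1$ (here $q \ge \tfrac32$ suffices a fortiori), the function $\sigma \mapsto \sigma^q/\varphi(\sigma)$ is non-decreasing, so I can write $\varphi(\sigma) = \varphi(\sigma)\sigma^{-q}\cdot\sigma^{q}$ and split the supremum: $|r_\lambda(\sigma)\varphi(\sigma)| \le \varphi(\lambda)\lambda^{-q} |r_\lambda(\sigma)\sigma^{q}|$ on the part where $\sigma \le \lambda$-type scaling applies, getting $\gamma_q \varphi(\lambda)$ from \eqref{eq:qual_reg_scheme} and $\gamma_0\varphi(\lambda)$ from \eqref{eq:r_lam_est} on the complementary range, yielding the $(\gamma_0 + \gamma_1)\varphi(\lambda)$ shape. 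For \eqref{eq:bound_approx_excess2} the only change is an extra $\sqrt{\A^*\A}$ in front, handled again by the comparison $\norm{(\A^*\A)^{1/2}(\lambda\I+[\A^*\A]_N)^{-1/2}} \le \norm{(\lambda\I+\A^*\A)^{1/2}(\lambda\I+[\A^*\A]_N)^{-1/2}} \le \Xi^{1/2}$, producing one more $\Xi^{1/2}$ (so $\Xi^{3/2}$ total) and shifting $\varphi$'s qualification demand to $q \ge \tfrac32$ since we now need $|r_\lambda(\sigma)\sigma^{1/2}\varphi(\sigma)|$; the factor $\sqrt{8}$ arises from the constant $2$ inside $\Xi$ when a power $\tfrac32$ is taken.

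The main obstacle, and the place requiring the most care, is making the two operator-monotone transfers rigorous in the right order: one must move $\varphi$ from $\A^*\A$ to $[\A^*\A]_N$ \emph{and} bound $r_\lambda$, $g_\lambda$ of $[\A^*\A]_N$, while keeping all the intermediate $(\lambda\I + \cdot)^{\pm 1/2}$ factors matched so that every norm that appears is either a scheme-bound ($\gamma_\bullet$), a comparison bound ($\Xi$), or the source element norm $\norm{v^+}_{\CalL^2}$. A subtle point is that operator monotonicity of $\varphi$ is only an order relation, not a norm bound, so one applies it to the quadratic form $\langle \varphi(\A^*\A) w, w\rangle \le C\langle \varphi(\lambda\I+[\A^*\A]_N) w, w\rangle$ after the $\Xi$-conjugation has made $\A^*\A \prec \Xi(\lambda\I+[\A^*\A]_N)$ available, and then absorbs the $\lambda\I$ shift using Lemma~\ref{lem:op_mon} (monotonicity of $\varphi(\sigma)/\sigma$ up to $C$) to pass back to $\varphi([\A^*\A]_N)$ plus a harmless $\varphi(\lambda)$ remainder. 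Everything else — collecting constants, noting the bounds hold on the event of probability $1-\delta$ on which \eqref{eq:op_est_1} and hence \eqref{eq:op_est_4} hold — is routine bookkeeping.
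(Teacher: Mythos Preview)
Your starting point $u^+-\bar u^N_\lambda=r_\lambda([\A^*\A]_N)\,\varphi(\A^*\A)v^+$ is correct, but the route you outline after that is not the paper's, and it contains a real gap. You propose to \emph{transfer} $\varphi(\A^*\A)$ to $\varphi([\A^*\A]_N)$ via operator monotonicity, arguing that $\A^*\A\prec\Xi(\lambda\I+[\A^*\A]_N)$ and hence $\varphi(\A^*\A)\prec$ (something commuting with $[\A^*\A]_N$). The problem is that an operator inequality $\varphi(\A^*\A)\prec\Psi$ controls only the quadratic form $\langle\varphi(\A^*\A)w,w\rangle$; it does \emph{not} allow you to replace $\varphi(\A^*\A)$ by $\Psi$ inside the norm $\norm{r_\lambda([\A^*\A]_N)\,\varphi(\A^*\A)v^+}_{\CalL^2}$, because $r_\lambda([\A^*\A]_N)$ and $\varphi(\A^*\A)$ do not commute. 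The ``change of anchor'' step as you describe it does not go through without a substantially different mechanism.

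The paper avoids this issue entirely: it never moves $\varphi$ over to the empirical operator. Instead it inserts resolvents with \emph{integer} powers,
\[
r_\lambda([\A^*\A]_N)\,\varphi(\A^*\A)
= \underbrace{r_\lambda([\A^*\A]_N)(\lambda\I+[\A^*\A]_N)}_{(I)}\cdot
\underbrace{(\lambda\I+[\A^*\A]_N)^{-1}(\lambda\I+\A^*\A)}_{\le\ \Xi}\cdot
\underbrace{(\lambda\I+\A^*\A)^{-1}\varphi(\A^*\A)}_{(II)},
\]
so that $\varphi(\A^*\A)$ stays paired with $\A^*\A$ and $(II)$ is bounded by pure \emph{scalar} spectral calculus plus Lemma~\ref{lem:op_mon}: $\sup_\sigma(\lambda+\sigma)^{-1}\varphi(\sigma)\le C\varphi(\lambda)/\lambda$. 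Piece $(I)$ is handled by the scheme bounds, $\sup_\sigma(\lambda+\sigma)|r_\lambda(\sigma)|\le(\gamma_0+\gamma_1)\lambda$, and the middle factor is exactly $\Xi$ from \eqref{eq:op_est_4}. For \eqref{eq:bound_approx_excess2} one adds a factor $(\A^*\A+\lambda\I)^{1/2}(\lambda\I+[\A^*\A]_N)^{-1/2}$ in front (giving the extra $\Xi^{1/2}$) and replaces $(I)$ by $\norm{(\lambda\I+[\A^*\A]_N)^{3/2}r_\lambda([\A^*\A]_N)}$.

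Two smaller corrections. First, the constant $\sqrt{8}$ does not come from the $2$ inside $\Xi$; it comes from the elementary inequality $(\sigma+\lambda)^{3/2}\le 2^{3/2}(\sigma^{3/2}+\lambda^{3/2})$ used when bounding $(III)=\norm{(\lambda\I+[\A^*\A]_N)^{3/2}r_\lambda([\A^*\A]_N)}$ by $(\gamma_0+\gamma_{3/2})\sqrt{8}\,\lambda^{3/2}$. Second, Lemma~\ref{lem:op_mon} is used only in the scalar estimate for $(II)$, not to compare $\varphi$ at two different operators.
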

\begin{proof}
\begin{align*}
 &\norm{u^+ - \bar{u}^N_{\lambda}}_{\CalL^2} \le \norm{(I-g_\lambda(\A^* \A]_N)[\A^* \A]_N) \varphi(\A^* \A) }_{\CalL^2 \to \CalL^2} \norm{v^+}_{\CalL^2}\\  
 &\le  \norm{([\A^* \A]_N+ \lambda \I )r_{\lambda}([\A^* \A]_N)}_{\CalL^2 \to \CalL^2}
 \norm{([\A^* \A]_N+ \lambda \I )^{-1}(\A^* \A+ \lambda \mathbb{I} )}_{\CalL^2 \to \CalL^2}
\\&\cdot  \norm{(\A^* \A+ \lambda \mathbb{I} )^{-1} \varphi(\A^* \A)}_{\CalL^2 \to \CalL^2} \norm{v^+}_{\CalL^2}\\
 &\le  \Xi 
 \norm{v^+}_{\CalL^2} \underbrace{\norm{([\A^* \A]_N+ \lambda \mathbb{I} )r_{\lambda}([\A^* \A]_N)}_{\CalL^2 \to \CalL^2}}_{(I)}  \underbrace{\norm{(\A^* \A+ \lambda \mathbb{I} )^{-1} \varphi(\A^* \A)}_{\CalL^2 \to \CalL^2}}_{(II)}.
\end{align*}   
    
    The part (I) can now be estimated directly by the spectral calculus, \eqref{eq:r_lam_est}--\eqref{eq:qual_reg_scheme} and the triangle inequality as follows (see Lemma 7.1. in \cite{lu2020balancing}):
    \begin{align*}
        (I) \le (\gamma_0+\gamma_1) \lambda.
    \end{align*}
    For the part (II), we can use the argument from \cite{pereverzyev2022introduction}[Proposition 3.1.] or \cite{lu2020balancing} to infer
\begin{align} \label{eq:bound_on_II}
(II) \le C \frac{\varphi(\lambda)}{\lambda}.
\end{align}
For the reader's convenience we repeat that argument here. First notice that in view of Lemma \ref{lem:op_mon} we have
\begin{align*}
\sup_{\lambda \le \sigma \le \tilde{\kappa}^2} (\sigma+\lambda)^{-1} \varphi(\sigma)&\le\sup_{\lambda \le \sigma \le \tilde{\kappa}^2} 
(\sigma+\lambda)^{-1}\sigma  \sup_{\lambda \le \sigma \le \tilde{\kappa}^2} \frac{\varphi(\sigma)} {\sigma} 
\le C \frac{\varphi(\lambda)}{\lambda},
\end{align*}
which, together with obvious bound 
\begin{align*}
\sup_{0 \le \sigma \le \lambda} (\sigma+\lambda)^{-1} \varphi(\sigma)&\le\frac{1}{\lambda}
\sup_{0 \le \sigma \le \lambda} \varphi(\sigma) 
\le \frac{\varphi(\lambda)}{\lambda},
\end{align*}
gives us \eqref{eq:bound_on_II} and finally, by the spectral calculus, the bound \eqref{eq:bound_approx_hs_1}.


For \eqref{eq:bound_approx_excess2} we argue as follows:
\begin{align*}
    &\norm{\A(u^+ - \bar{u}^N_{\lambda})}_{L^2(\Omega, \mathbb{P})}= \norm{(\A^* \A)^{\frac12}(g_\lambda([\A^* \A]_N)[\A^* \A]_N - \I)\varphi(\A^* \A) v^+)}_{\CalL^2}\\
    &\le \norm{(\A^* \A+ \lambda \mathbb{I} )^{\frac12}([\A^* \A]_N+ \lambda \I )^{-\frac12}}_{\CalL^2 \to \CalL^2} \underbrace{\norm{([\A^* \A]_N+ \lambda \I )^{\frac32} r_{\lambda}([\A^* \A]_N)}_{\CalL^2 \to \CalL^2}}_{(III)} \\
    &\cdot \norm{([\A^* \A]_N+ \lambda \I )^{-1}(\A^* \A+ \lambda \mathbb{I} )}_{\CalL^2 \to \CalL^2} \norm{(\A^* \A+ \lambda \mathbb{I} )^{-1} \varphi(\A^* \A)}_{\CalL^2 \to \CalL^2} \norm{v^+}_{\CalL^2}.
    \end{align*}
Recall that the qualification $q$ of the employed regularization is assumed to be high enough, such that $q \geq \frac32$. Then Hölder’s inequality (i.e. in our case the inequality $(a+b)^c \le 2^c(a^c+b^c)$ for $a,b \geq 0$, $c>0$) together with the spectral calculus and \eqref{eq:r_lam_est}--\eqref{eq:qual_reg_scheme} yields
    \begin{align*}
    (III) \le \sup_{0 \le \sigma \le \tilde{\kappa}^2}   (\sigma+\lambda)^{\frac32} r_{\lambda}(\sigma) \le \sup_{0 \le \sigma \le \tilde{\kappa}^2} 2^{\frac32}  (\sigma^{\frac32}+\lambda^{\frac32}) r_{\lambda}(\sigma) \le  \sqrt{8}(\gamma_0+\gamma_{\frac32}) \lambda^{\frac32}
    \end{align*}
Combining this bound with the estimate on (II) and \eqref{eq:op_est_4}, \eqref{eq:op_est_5}, we arrive at \eqref{eq:bound_approx_excess2}.
\end{proof}

Now we come to Propagation Error part.
\begin{lemma} \label{lem:prop}
   Under assumptions \ref{ass:unif}--\ref{eq:ass:projection} and \eqref{eq:noise_1}, with probability $1-\delta$ and $0\le \lambda \le \tilde{\kappa}^2$ we have the following bound for the propagation error:

\begin{align} 
   &\norm{u^N_{\lambda}-\bar{u}^N_{\lambda}}_{\CalL^2} \le \Xi^{\frac12} (\gamma_{-1 / 2} +\gamma_{-1}) \frac{\sigma}{\tilde{\kappa} \delta \sqrt{\lambda}} S(N, \lambda) \nonumber \\
 &\norm{\A(u^N_{\lambda}-\bar{u}^N_{\lambda})}_{L^2(\Omega, \mathbb{P})} \le \Xi (\gamma_0+\gamma_{-1}+1) \frac{\sigma}{\tilde{\kappa} \delta} S(N, \lambda),
 \label{eq:bound_prop_1}
\end{align}
whereas under \eqref{eq:noise_2}:
\begin{align} 
&\norm{u^N_{\lambda}-\bar{u}^N_{\lambda}}_{\CalL^2} \le \Xi^{\frac12} (\gamma_{-1 / 2} +\gamma_{-1})\frac{(M+\sigma) \log (2 / \delta)}{\tilde{\kappa} \sqrt{\lambda}} S(N, \lambda) \nonumber \\
 &\norm{\A(u^N_{\lambda}-\bar{u}^N_{\lambda})}_{L^2(\Omega, \mathbb{P})} \le \Xi (\gamma_0+\gamma_{-1}+1) \frac{(M+\sigma) \log (2 / \delta)}{\tilde{\kappa}} S(N,\lambda). \label{eq:bound_prop_2}
\end{align}
\end{lemma}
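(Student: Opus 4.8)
The plan is to start from the identity
\begin{align*}
u^N_{\lambda}-\bar{u}^N_{\lambda}=g_{\lambda}([\A^* \A]_N)\big([\A^* Y]_N-[\A^* \A]_N u^+\big),
\end{align*}
which is immediate from the definitions of $u^N_\lambda$ and $\bar u^N_\lambda$, and then to insert the resolvents $(\lambda\I+[\A^*\A]_N)^{\pm1/2}$ and $(\lambda\I+\A^*\A)^{\pm1/2}$ so as to factor the right-hand side into a purely spectral factor in $[\A^*\A]_N$ built from $g_\lambda$, one (or two) copies of the cross-resolvent factor already controlled by \eqref{eq:op_est_5}, and the data residual $(\lambda\I+\A^*\A)^{-1/2}\big([\A^* Y]_N-[\A^* \A]_N u^+\big)$ controlled by Lemma~\ref{lem:op_est_2_3}. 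Throughout, $\norm{[\A^*\A]_N}_{\CalL^2\to\CalL^2}\le\tilde\kappa^2$ by Lemma~\ref{lem:hs_bounds}, so all spectral estimates for $[\A^*\A]_N$ may be taken over $0<\sigma\le\tilde\kappa^2$.

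For the $\CalL^2$-bound I would write
\begin{align*}
u^N_{\lambda}-\bar{u}^N_{\lambda}&=g_{\lambda}([\A^* \A]_N)(\lambda\I+[\A^*\A]_N)^{1/2}\\
&\quad\cdot(\lambda\I+[\A^*\A]_N)^{-1/2}(\lambda\I+\A^*\A)^{1/2}\\
&\quad\cdot(\lambda\I+\A^*\A)^{-1/2}\big([\A^* Y]_N-[\A^* \A]_N u^+\big),
\end{align*}
and bound the norm of the product by the product of the norms. The first factor is estimated by spectral calculus, the elementary inequality $\sqrt{\sigma+\lambda}\le\sqrt\sigma+\sqrt\lambda$, and \eqref{eq:g_lam_sqrt_est}, \eqref{eq:g_lam_est}:
\begin{align*}
\norm{g_{\lambda}([\A^* \A]_N)(\lambda\I+[\A^*\A]_N)^{1/2}}_{\CalL^2\to\CalL^2}\le\sup_{0<\sigma\le\tilde\kappa^2}\sqrt{\sigma+\lambda}\,|g_\lambda(\sigma)|\le\frac{\gamma_{-1/2}+\gamma_{-1}}{\sqrt\lambda};
\end{align*}
the middle factor is at most $\Xi^{1/2}$ by \eqref{eq:op_est_5}; and the last factor is at most $\frac{\sigma}{\tilde\kappa\delta}S(N,\lambda)$ under \eqref{eq:noise_1} and at most $\frac{(M+\sigma)\log(2/\delta)}{\tilde\kappa}S(N,\lambda)$ under \eqref{eq:noise_2}, by Lemma~\ref{lem:op_est_2_3}. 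Multiplying the three gives precisely the bounds on $\norm{u^N_{\lambda}-\bar{u}^N_{\lambda}}_{\CalL^2}$ in \eqref{eq:bound_prop_1} and \eqref{eq:bound_prop_2}.

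For the excess-risk component I would use \eqref{eq:norm_comparisons1} to pass to $\norm{\A(u^N_{\lambda}-\bar{u}^N_{\lambda})}_{L^2(\Omega,\mathbb{P})}=\norm{(\A^*\A)^{1/2}(u^N_{\lambda}-\bar{u}^N_{\lambda})}_{\CalL^2}$, and then insert one extra resolvent pair, splitting $(\A^*\A)^{1/2}(u^N_{\lambda}-\bar{u}^N_{\lambda})$ as the product of $(\A^*\A)^{1/2}(\lambda\I+\A^*\A)^{-1/2}$ (norm $\le1$), a first copy of $(\lambda\I+\A^*\A)^{1/2}(\lambda\I+[\A^*\A]_N)^{-1/2}$ (norm $\le\Xi^{1/2}$ by \eqref{eq:op_est_5}), the symmetrised spectral factor $(\lambda\I+[\A^*\A]_N)^{1/2}g_\lambda([\A^*\A]_N)(\lambda\I+[\A^*\A]_N)^{1/2}$, a second copy of $(\lambda\I+[\A^*\A]_N)^{-1/2}(\lambda\I+\A^*\A)^{1/2}$ (again $\le\Xi^{1/2}$ by self-adjointness and \eqref{eq:op_est_5}), and the data residual bounded by Lemma~\ref{lem:op_est_2_3}. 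The symmetrised spectral factor obeys, by spectral calculus together with \eqref{eq:sigma_g_lam_est} and \eqref{eq:g_lam_est},
\begin{align*}
\norm{(\lambda\I+[\A^*\A]_N)^{1/2}g_\lambda([\A^*\A]_N)(\lambda\I+[\A^*\A]_N)^{1/2}}_{\CalL^2\to\CalL^2}\le\sup_{0<\sigma\le\tilde\kappa^2}(\sigma+\lambda)|g_\lambda(\sigma)|\le\gamma_0+\gamma_{-1}+1,
\end{align*}
and multiplying out yields the bounds on $\norm{\A(u^N_{\lambda}-\bar{u}^N_{\lambda})}_{L^2(\Omega,\mathbb{P})}$ in \eqref{eq:bound_prop_1} and \eqref{eq:bound_prop_2}.

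The only bookkeeping is twofold and routine: (i) the estimate \eqref{eq:op_est_5} (through Lemma~\ref{lem:op_est_4}) and Lemma~\ref{lem:op_est_2_3} each hold on an event of probability $1-\delta$, so one intersects them via a union bound (rescaling $\delta$ if exact constants are desired), following the convention already used in the auxiliary lemmas; and (ii) Definition~\ref{def:gen_reg} records its bounds for $0<\lambda\le\norm{[\A^*\A]_N}_{\CalL^2\to\CalL^2}$, so for $\lambda$ in the spectrally empty interval $(\norm{[\A^*\A]_N}_{\CalL^2\to\CalL^2},\tilde\kappa^2]$ one notes that the spectral integrals defining the operator norms above only see $\sigma\le\norm{[\A^*\A]_N}_{\CalL^2\to\CalL^2}$, so the estimates persist. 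There is no substantive obstacle here; the only genuine choice is the arrangement of the resolvent factors so that every piece matches an already-established lemma, and the two noise cases differ only through the input bound supplied by Lemma~\ref{lem:op_est_2_3}.
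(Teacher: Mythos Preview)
Your proof is correct and follows essentially the same approach as the paper's: the same identity $u^N_\lambda-\bar u^N_\lambda=g_\lambda([\A^*\A]_N)([\A^*Y]_N-[\A^*\A]_Nu^+)$, the same insertion of the resolvents $(\lambda\I+[\A^*\A]_N)^{\pm1/2}$ and $(\lambda\I+\A^*\A)^{\pm1/2}$, the same spectral bounds $(\sigma+\lambda)^{1/2}|g_\lambda(\sigma)|\le(\gamma_{-1/2}+\gamma_{-1})/\sqrt\lambda$ and $(\sigma+\lambda)|g_\lambda(\sigma)|\le\gamma_0+\gamma_{-1}+1$, and the same invocation of \eqref{eq:op_est_5} and Lemma~\ref{lem:op_est_2_3}. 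The only cosmetic difference is that the paper writes the middle spectral factor as $([\A^*\A]_N+\lambda\I)g_\lambda([\A^*\A]_N)$ rather than your symmetrised $([\A^*\A]_N+\lambda\I)^{1/2}g_\lambda([\A^*\A]_N)([\A^*\A]_N+\lambda\I)^{1/2}$, which are equal since everything commutes.
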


\begin{proof}
We use the following decomposition: 

\begin{align*}
&\norm{u^N_{\lambda}-\bar{u}^N_{\lambda}}_{\CalL^2}=\norm{g_\lambda([\A^* \A]_N)([\A^* \A]_N u^+ -[A^* Y]_N)}_{\CalL^2}
\\&\le \norm{(\A^* \A+ \lambda \mathbb{I} )^{\frac12}([\A^* \A]_N+ \lambda \I )^{-\frac12}}_{\CalL^2 \to \CalL^2} \underbrace{\norm{([\A^* \A]_N+ \lambda \I )^{\frac12} g_{\lambda}([\A^* \A]_N)}_{\CalL^2 \to \CalL^2}}_{(IV)}
\\
&\cdot   \norm{(\A^* \A+ \lambda \mathbb{I} )^{-\frac12}([\A^* \A]_N u^+ -[A^* Y]_N)}_{\CalL^2}
\end{align*}
and
\begin{align*}
&\norm{\A(u^N_{\lambda}-\bar{u}^N_{\lambda})}_{L^2(\Omega, \mathbb{P})}=\norm{(\A^* \A)^{\frac12}g_\lambda([\A^* \A]_N)([\A^* \A]_N u^+ -[A^* Y]_N)}_{\CalL^2} \\
&\le \norm{(\A^* \A+ \lambda \mathbb{I} )^{\frac12}([\A^* \A]_N+ \lambda \I )^{-\frac12}}_{\CalL^2 \to \CalL^2} \norm{([\A^* \A]_N+ \lambda \I )g_{\lambda}([\A^* \A]_N)}_{\CalL^2 \to \CalL^2} \\
&\cdot  \norm{([\A^* \A]_N+ \lambda \I )^{-\frac12} (\A^* \A+ \lambda \mathbb{I} )^{\frac12}}_{\CalL^2 \to \CalL^2} \norm{(\A^* \A+ \lambda \mathbb{I} )^{-\frac12}([\A^* \A]_N u^+ -[A^* Y]_N)}_{\CalL^2},
\end{align*}
so that we can deduce \eqref{eq:bound_prop_1} and \eqref{eq:bound_prop_2} by the spectral calculus, \eqref{eq:g_lam_est}--\eqref{eq:sigma_g_lam_est}, \eqref{eq:op_est_5} and \eqref{eq:op_est_2} or \eqref{eq:op_est_3}, respectively.
To deal with (IV) efficiently, similar to (III), we use the spectral calculus, H\"older's inequality,  and \eqref{eq:g_lam_sqrt_est}--\eqref{eq:g_lam_est} to obtain:
 \begin{align*}
 (IV) \le \sup_{0 \le \sigma \le \tilde{\kappa}^2}   (\sigma+\lambda)^{\frac12} g_{\lambda}(\sigma) \le \sup_{0 \le \sigma \le \tilde{\kappa}^2}   (\sigma^{\frac12}+\lambda^{\frac12}) g_{\lambda}(\sigma) \le  \frac{1}{\sqrt{\lambda}}(\gamma_{-1 / 2} +\gamma_{-1}).
 \end{align*}
\end{proof}

Combining Lemmas \ref{lem:approx}--\ref{lem:prop} and using some straightforward estimates finally allows us to deduce the following finite sample error bounds:
\begin{lemma} \label{lem:gen_bound_1}
  If the employed regularization scheme has qualification $q \geq \frac32$, then under assumptions \ref{ass:unif}, \ref{eq:ass:projection}, \ref{ass:smoothness} and noise assumption \eqref{eq:noise_1} with probability $1-\delta$ we have
  \begin{align} 
  \norm{u^+ - u^N_{\lambda}}_{\CalL^2} &\le  C \Upsilon(\lambda)^{\frac12} \left(\Upsilon(\lambda)^{\frac12} \varphi(\lambda) + S(N,\lambda) \frac{1}{\sqrt{\lambda}} \right) \frac{\log(\frac{4}{\delta})}{\delta}, \nonumber \\
     \mathcal{E}(u^N_{\lambda})-\mathcal{E}(u^+) &\le C \Upsilon(\lambda)^2 \left(\lambda \varphi(\lambda)^2 + S(N,\lambda)^2 \right) \frac{\log^4(\frac{4}{\delta})}{\delta^2},
     \label{eq:gen_bound_noise_1}
  \end{align}
  whereas under noise assumption \eqref{eq:noise_2} it holds that
    \begin{align} 
   \norm{u^+ - u^N_{\lambda}}_{\CalL^2} &\le  C \Upsilon(\lambda)^{\frac12} \left(\Upsilon(\lambda)^{\frac12} \varphi(\lambda) + S(N,\lambda) \frac{1}{\sqrt{\lambda}} \right)  \log^2(\frac{4}{\delta}), \nonumber \\
     \mathcal{E}(u^N_{\lambda})-\mathcal{E}(u^+) &\le C \Upsilon(\lambda)^2 \left(\lambda \varphi(\lambda)^2 + S(N,\lambda)^2 \right) \log^6(\frac{4}{\delta});
     \label{eq:gen_bound_noise_2}
  \end{align}
here $C$ is a generic constant independent of $N$, $\delta$ and $\lambda$.
\end{lemma}
Below we also employ the following statement:
\begin{lemma}
\label{lem:estimates}
    There exists a $\lambda^*$ satisfying $\frac{\mathcal{N}(\lambda^*)}{\lambda^*}=N$, such that for $\lambda^* \le \lambda \le \norm{\A^* \A}_{\CalL^2 \to \CalL^2} $ it holds:
    \begin{align}\label{eq:S_n_est}
        S(N,\lambda) \le \frac{2  \tilde{\kappa} }{\sqrt{N}}(\sqrt{2}  \tilde{\kappa}  +\sqrt{\N(\lambda)}),
    \end{align}
    and
    \begin{align}\label{eq:upsilon_est}
        \Upsilon(\lambda) \le 1+\left(4 \tilde{\kappa}^2+2 \tilde{\kappa}   \right)^2.
    \end{align}
\end{lemma}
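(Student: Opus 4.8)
The plan is to establish the existence of $\lambda^*$ first, then use the defining relation $\mathcal{N}(\lambda^*)/\lambda^* = N$ together with monotonicity to control both quantities on the range $\lambda^* \le \lambda \le \|\A^*\A\|$. For existence: the function $\lambda \mapsto \mathcal{N}(\lambda)/\lambda$ is continuous and strictly decreasing on $(0, \|\A^*\A\|]$, since $\mathcal{N}(\lambda) = \sum_j \sigma_j/(\lambda+\sigma_j)$ is non-increasing while $1/\lambda$ is strictly decreasing. As $\lambda \to 0^+$ this quantity tends to $+\infty$ (the effective dimension stays bounded below by $\sigma_1/(\lambda+\sigma_1) \to 1$ for the top eigenvalue, or more simply $\mathcal{N}(\lambda) \to \operatorname{rank}$ or $\infty$, and $1/\lambda \to \infty$), and at $\lambda = \|\A^*\A\| = \sigma_1$ it equals $\mathcal{N}(\sigma_1)/\sigma_1 \le \tilde\kappa^2/\sigma_1$, which we may assume is $\le N$ for $N$ large (this is the mild regime restriction implicit in the statement). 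By the intermediate value theorem there is a unique $\lambda^*$ with $\mathcal{N}(\lambda^*)/\lambda^* = N$.

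Next I would bound $S(N,\lambda)$ for $\lambda \ge \lambda^*$. From the definition $S(N,\lambda) = \frac{2\tilde\kappa}{\sqrt N}\left(\frac{\tilde\kappa}{\sqrt{N\lambda}} + \sqrt{\mathcal{N}(\lambda)}\right)$, the only term needing attention is $\frac{\tilde\kappa}{\sqrt{N\lambda}}$. Since $\lambda \ge \lambda^*$ and $\mathcal{N}$ is non-increasing, $N\lambda \ge N\lambda^* = \mathcal{N}(\lambda^*) \ge \mathcal{N}(\lambda)$... but that goes the wrong way; instead use $N\lambda \ge N\lambda^* = \mathcal{N}(\lambda^*)$, and separately I want $\frac{\tilde\kappa}{\sqrt{N\lambda}} \le \sqrt{2}\,\tilde\kappa$, i.e. $N\lambda \ge 1/2$. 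This follows because $N\lambda \ge N\lambda^* = \mathcal{N}(\lambda^*) \ge \sigma_1/(\lambda^*+\sigma_1)$; combined with $\lambda^* \le \sigma_1 = \|\A^*\A\|$ we get $\mathcal{N}(\lambda^*) \ge 1/2$, hence $N\lambda \ge 1/2$ and $\frac{\tilde\kappa}{\sqrt{N\lambda}} \le \sqrt2\,\tilde\kappa$. Substituting gives \eqref{eq:S_n_est}.

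For $\Upsilon(\lambda) = \left(S(N,\lambda)/\sqrt\lambda\right)^2 + 1$, I would use \eqref{eq:S_n_est} and crude bounds: $\sqrt{\mathcal{N}(\lambda)} \le \sqrt{N\lambda}$ since $\mathcal{N}(\lambda) \le \mathcal{N}(\lambda^*) = N\lambda^* \le N\lambda$, so $S(N,\lambda) \le \frac{2\tilde\kappa}{\sqrt N}(\sqrt2\,\tilde\kappa + \sqrt{N\lambda}) = \frac{2\sqrt2\,\tilde\kappa^2}{\sqrt N} + 2\tilde\kappa\sqrt\lambda$. Dividing by $\sqrt\lambda$ and using again $N\lambda \ge 1/2$ (so $\frac{1}{\sqrt{N\lambda}} \le \sqrt2$, giving $\frac{\tilde\kappa^2}{\sqrt{N}\sqrt\lambda} = \frac{\tilde\kappa^2}{\sqrt{N\lambda}}\cdot\frac{1}{\sqrt\lambda}\cdot\sqrt\lambda$... ) — here I need to be a little careful, but the cleaner route is: $\frac{S(N,\lambda)}{\sqrt\lambda} = \frac{2\tilde\kappa}{\sqrt{N\lambda}}\left(\frac{\tilde\kappa}{\sqrt{N\lambda}} + \sqrt{\mathcal{N}(\lambda)}\right) \le 2\sqrt2\,\tilde\kappa\left(\sqrt2\,\tilde\kappa + \sqrt{N\lambda}\cdot\frac{1}{\sqrt{N\lambda}}\cdot\sqrt{\mathcal N(\lambda)}\right)$; using $\mathcal N(\lambda)\le N\lambda$ yields $\sqrt{\mathcal N(\lambda)}/\sqrt{N\lambda}\le 1$, so $\frac{S(N,\lambda)}{\sqrt\lambda} \le 2\sqrt2\,\tilde\kappa(\sqrt2\,\tilde\kappa+1) = 4\tilde\kappa^2 + 2\sqrt2\,\tilde\kappa \le 4\tilde\kappa^2 + 2\tilde\kappa$ — wait, $2\sqrt2 > 2$, so I should record the bound as $4\tilde\kappa^2 + 2\sqrt2\,\tilde\kappa$ and the stated $\left(4\tilde\kappa^2 + 2\tilde\kappa\right)^2$ would need $\sqrt2 \le 1$, which is false; so the honest version uses $2\sqrt2\,\tilde\kappa \le 4\tilde\kappa^2+2\tilde\kappa$ loosely, or one simply absorbs constants. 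Squaring and adding $1$ gives \eqref{eq:upsilon_est} up to the constant. \textbf{The main obstacle} is precisely this bookkeeping of absolute constants — making sure the chain $N\lambda\ge N\lambda^* = \mathcal N(\lambda^*)$ genuinely yields a lower bound like $1/2$ (which rests on $\lambda^* \le \|\A^*\A\|$ forcing $\mathcal N(\lambda^*)$ to be bounded below, itself needing $N$ not too small), and then propagating that through the division by $\sqrt\lambda$ without losing the right powers; none of the steps is deep, but the constants must line up with the claimed $\sqrt2$ and $(4\tilde\kappa^2+2\tilde\kappa)^2$.
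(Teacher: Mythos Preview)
Your argument is essentially the paper's, and is correct; the only place you get tangled is the constant in the $\Upsilon(\lambda)$ bound, and that is a bookkeeping slip rather than a gap. Two remarks:

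\emph{On the constant.} Do not factor out $\tfrac{2\tilde\kappa}{\sqrt{N\lambda}}$; bound the two summands of
\[
\frac{S(N,\lambda)}{\sqrt{\lambda}}=\frac{2\tilde\kappa^2}{N\lambda}+\frac{2\tilde\kappa\sqrt{\mathcal N(\lambda)}}{\sqrt{N\lambda}}
\]
separately. From $N\lambda\ge N\lambda^*=\mathcal N(\lambda^*)\ge \tfrac12$ the first term is $\le 4\tilde\kappa^2$, and from $\mathcal N(\lambda)\le\mathcal N(\lambda^*)=N\lambda^*\le N\lambda$ the second is $\le 2\tilde\kappa$. This gives exactly $4\tilde\kappa^2+2\tilde\kappa$, with no stray $\sqrt2$.

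\emph{On the organization.} The paper's proof is the same in spirit but slightly cleaner for the second estimate: it observes that $\lambda\mapsto S(N,\lambda)/\sqrt\lambda$ is decreasing, so it suffices to evaluate at $\lambda=\lambda^*$, where $\mathcal N(\lambda^*)=N\lambda^*$ makes the second summand equal to $2\tilde\kappa$ on the nose. For the lower bound $N\lambda^*\ge\tfrac12$ the paper uses $\mathcal N(\lambda^*)\ge\mathcal N(\|\A^*\A\|)\ge\tfrac12$, which is exactly your $\sigma_1/(\lambda^*+\sigma_1)\ge\tfrac12$ argument in different clothing.
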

For the sake of completeness the proof is also placed here.
\begin{proof}
It is easy to see 
that $\lambda \mapsto \frac{\N(\lambda)}{\lambda}$ is decreasing from $\infty$ to zero,
thus $\lambda^*$ exists and is well defined. Since $\N(\lambda)$ is also a decreasing function, we have $N=\frac{\N(\lambda^*)}{\lambda^*} \geq \frac{\N(\norm{\A^* \A}_{\CalL^2 \to \CalL^2})}{\lambda^*} \geq \frac{1}{2 \lambda^*}$, so that $N\lambda^* \geq \frac12$. This immediately implies \eqref{eq:S_n_est}. To obtain \eqref{eq:upsilon_est}, we observe the following:
\begin{align*}
    \frac{S(N,\lambda)}{\sqrt{\lambda}} \le \frac{S(N,\lambda^*)}{\sqrt{\lambda^*}} = 2 \tilde{\kappa} \left( \frac{\tilde{\kappa}}{N \lambda^*}+\sqrt{\frac{\N(\lambda^*)}{N \lambda^*}}\right) \le 2 \tilde{\kappa}(2\tilde{\kappa}+1).
\end{align*}
\end{proof}


A straightforward application of the previous lemma allows us to simplify the statement of Lemma \ref{lem:gen_bound_1}:
\begin{theorem} \label{thm:main}
Under the conditions of Lemma \ref{lem:gen_bound_1} it holds
  \begin{align*} 
  \norm{u^+ - u^N_{\lambda}}_{\CalL^2} &\le C P(\delta) \left(\varphi(\lambda) + \sqrt{\frac{\N(\lambda)}{N\lambda}} \right), 
  \\
     \mathcal{E}(u^N_{\lambda})-\mathcal{E}(u^+) &\le C Q(\delta) \left(\lambda \varphi(\lambda)^2 + \frac{\N(\lambda)}{N} \right), 
  \end{align*}
  where $C$ denotes a generic constant independent of $N$ and $\lambda$, while the forms of the coefficients $P(\delta), Q(\delta)$ depend on the noise assumptions and coincides with the corresponding $\delta$-depending quantities in \eqref{eq:gen_bound_noise_1}, \eqref{eq:gen_bound_noise_2}.
\end{theorem}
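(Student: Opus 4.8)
The plan is to obtain Theorem~\ref{thm:main} as an immediate corollary of Lemma~\ref{lem:gen_bound_1} and Lemma~\ref{lem:estimates}: the former already isolates all the $\delta$-dependence into the factors that will become $P(\delta)$ and $Q(\delta)$, so it only remains to replace the two $\lambda$-dependent quantities $\Upsilon(\lambda)$ and $S(N,\lambda)$ occurring there by the simpler expressions appearing in the statement. Accordingly, I would work throughout in the admissible range $\lambda^* \le \lambda \le \norm{\A^* \A}_{\CalL^2 \to \CalL^2}$ furnished by Lemma~\ref{lem:estimates}, with $\lambda^*$ defined by $\N(\lambda^*)/\lambda^* = N$, since this is exactly the regime in which the estimates \eqref{eq:S_n_est} and \eqref{eq:upsilon_est} are valid.

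The first step is to discard the capacity factor: by \eqref{eq:upsilon_est} we have $\Upsilon(\lambda) \le 1+(4\tilde{\kappa}^2+2\tilde{\kappa})^2$, a constant depending only on $\tilde{\kappa}$ (hence on $\kappa$ and $p$), so that $\Upsilon(\lambda)^{1/2}$, $\Upsilon(\lambda)$ and $\Upsilon(\lambda)^2$ may all be absorbed into the generic constant $C$. The second step is to simplify $S(N,\lambda)$. Here I would use the elementary fact — already exploited in the proof of Lemma~\ref{lem:estimates} — that for $\lambda \le \norm{\A^* \A}_{\CalL^2\to\CalL^2}$ the largest eigenvalue $\sigma_1 = \norm{\A^* \A}_{\CalL^2\to\CalL^2}$ of $\A^*\A$ contributes $\sigma_1/(\lambda+\sigma_1)\ge \tfrac12$ to the trace defining $\N(\lambda)$, so that $\N(\lambda)\ge\tfrac12$ and therefore $\sqrt{2}\,\tilde{\kappa}\le 2\tilde{\kappa}\sqrt{\N(\lambda)}$. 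Substituting this into \eqref{eq:S_n_est} gives
\begin{align*}
S(N,\lambda)\le\frac{2\tilde{\kappa}}{\sqrt N}\bigl(\sqrt{2}\,\tilde{\kappa}+\sqrt{\N(\lambda)}\bigr)\le\frac{2\tilde{\kappa}(2\tilde{\kappa}+1)}{\sqrt N}\sqrt{\N(\lambda)}=C\sqrt{\frac{\N(\lambda)}{N}},
\end{align*}
whence $S(N,\lambda)/\sqrt{\lambda}\le C\sqrt{\N(\lambda)/(N\lambda)}$ and $S(N,\lambda)^2\le C\,\N(\lambda)/N$.

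Finally I would plug these bounds, together with $\Upsilon(\lambda)\le C$, into \eqref{eq:gen_bound_noise_1} and \eqref{eq:gen_bound_noise_2}. Under the noise condition \eqref{eq:noise_1} the surviving $\delta$-factors $\log(4/\delta)/\delta$ and $\log^4(4/\delta)/\delta^2$ are precisely what is denoted $P(\delta)$ and $Q(\delta)$; under \eqref{eq:noise_2} the factors $\log^2(4/\delta)$ and $\log^6(4/\delta)$ play the same role. This produces exactly the two inequalities claimed. I do not expect any genuine obstacle here: the only points requiring a little care are the harmless lower bound $\N(\lambda)\ge\tfrac12$ used to collapse the two summands of $S(N,\lambda)$ into one, and the bookkeeping ensuring that $C$ stays independent of $N$ and $\lambda$ (though not of $\kappa$, $p$, or the regularization constants $\gamma_\bullet$) while all the $\delta$-dependence remains confined to $P(\delta)$ and $Q(\delta)$.
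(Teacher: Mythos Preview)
Your proposal is correct and follows exactly the route the paper indicates: the paper merely states that Theorem~\ref{thm:main} is ``a straightforward application of the previous lemma'' (i.e., Lemma~\ref{lem:estimates}) to Lemma~\ref{lem:gen_bound_1}, and you have spelled out precisely that application, including the absorption of $\Upsilon(\lambda)$ into $C$ and the use of $\N(\lambda)\ge\tfrac12$ to merge the two summands of $S(N,\lambda)$.
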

This immediately gives us the following corollary:
\begin{corollary} \label{cor:gen_bound_power_rates}
    Let the assumptions of Lemma \ref{lem:gen_bound_1} be satisfied. Suppose that $\varphi(\lambda)=\lambda^{r}$ for some $0 \le r \le 1$ 
    and that the effective dimension obeys $\N(\lambda) \le C_0 \lambda^{-\theta}$ for any $\lambda>0$. If we choose $\lambda=N^{\frac{-1}{2r+\theta+1}}$, then with probability at least $1-\delta$ under the noise assumption \eqref{eq:noise_1} we have
    \begin{align} 
    \norm{u^+ - u^N_{\lambda}}_{\CalL^2} &\le C N^{\frac{-r}{2r+\theta+1}}\frac{\log(\frac{4}{\delta})}{\delta}, \nonumber \\
    \label{eq:gen_bound_power_noise_1}
        \mathcal{E}(u^N_{\lambda})-\mathcal{E}(u^+) &\le C  N^{\frac{-2r-1}{2r+\theta+1}}\frac{\log^4(\frac{4}{\delta})}{\delta^2},
    \end{align}
    whereas under noise assumption \eqref{eq:noise_2} it holds that:
    \begin{align} 
    \norm{u^+ - u^N_{\lambda}}_{\CalL^2} &\le C N^{\frac{-r}{2r+\theta+1}}\log^2(\frac{4}{\delta}), \nonumber \\
    \label{eq:gen_bound_power_noise_2}
     \mathcal{E}(u^N_{\lambda})-\mathcal{E}(u^+) &\le C  N^{\frac{-2r-1}{2r+\theta+1}}\log^6(\frac{4}{\delta}).
  \end{align}
\end{corollary}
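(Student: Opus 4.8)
The plan is to specialize Theorem \ref{thm:main} to the power-type source profile $\varphi(\lambda)=\lambda^r$ and the polynomial effective-dimension bound $\N(\lambda)\le C_0\lambda^{-\theta}$, so the argument is essentially bookkeeping of exponents. First I would verify that the proposed choice $\lambda=N^{-1/(2r+\theta+1)}$ is admissible, i.e. that it lies in the range $[\lambda^*,\norm{\A^*\A}_{\CalL^2\to\CalL^2}]$ on which Lemma \ref{lem:estimates}, and hence Theorem \ref{thm:main}, was established. Since $\N(\lambda)\le C_0\lambda^{-\theta}$, the defining relation $\N(\lambda^*)/\lambda^*=N$ forces $N\le C_0(\lambda^*)^{-(\theta+1)}$, i.e. $\lambda^*\le (C_0/N)^{1/(\theta+1)}$; comparing exponents (and using $2r+\theta+1\ge\theta+1$) one sees that $N^{-1/(2r+\theta+1)}\ge(C_0/N)^{1/(\theta+1)}$ for all $N$ large enough, while $N^{-1/(2r+\theta+1)}\le\norm{\A^*\A}_{\CalL^2\to\CalL^2}$ likewise holds for $N$ large. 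Hence Theorem \ref{thm:main} applies with this $\lambda$.

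Next I would substitute the two profiles into the bounds of Theorem \ref{thm:main}. For the $\CalL^2$-error this gives $\varphi(\lambda)+\sqrt{\N(\lambda)/(N\lambda)}\le\lambda^r+\sqrt{C_0}\,\lambda^{-(\theta+1)/2}N^{-1/2}$; writing $s=2r+\theta+1$, the choice $\lambda=N^{-1/s}$ turns both terms into a quantity of order $N^{-r/s}$ (the second one equals $\sqrt{C_0}\,N^{(\theta+1)/(2s)-1/2}=\sqrt{C_0}\,N^{-r/s}$). For the excess risk, $\lambda\varphi(\lambda)^2+\N(\lambda)/N\le\lambda^{2r+1}+C_0\lambda^{-\theta}N^{-1}$, and the same $\lambda$ equilibrates both terms at order $N^{-(2r+1)/s}$. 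Absorbing $C_0$ and the remaining fixed factors into the generic constant $C$ produces the claimed $N$-rates.

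Finally I would read off the $\delta$-dependence: the coefficients $P(\delta)$ and $Q(\delta)$ in Theorem \ref{thm:main} are exactly the $\delta$-factors already carried through Lemma \ref{lem:gen_bound_1}, namely $\log(4/\delta)/\delta$ and $\log^4(4/\delta)/\delta^2$ under noise condition \eqref{eq:noise_1} (giving \eqref{eq:gen_bound_power_noise_1}), and $\log^2(4/\delta)$ and $\log^6(4/\delta)$ under \eqref{eq:noise_2} (giving \eqref{eq:gen_bound_power_noise_2}). Combining these with the $N$-rates above yields both displays of the corollary.

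There is no genuinely hard step here; the only point requiring a little care is the admissibility check for $\lambda$, which needs $N$ large enough to guarantee $\lambda^*\le N^{-1/(2r+\theta+1)}\le\norm{\A^*\A}_{\CalL^2\to\CalL^2}$, together with the routine verification that the two terms in each of the two bounds are genuinely balanced by the stated choice of $\lambda$.
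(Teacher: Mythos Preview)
Your proposal is correct and follows exactly the route the paper intends: the corollary is stated as an immediate consequence of Theorem \ref{thm:main}, and your substitution of $\varphi(\lambda)=\lambda^r$ and $\N(\lambda)\le C_0\lambda^{-\theta}$ together with the balancing choice $\lambda=N^{-1/(2r+\theta+1)}$ is precisely that computation. Your added admissibility check for $\lambda$ (ensuring $\lambda^*\le\lambda\le\norm{\A^*\A}_{\CalL^2\to\CalL^2}$) is a point the paper glosses over but is the right thing to verify.
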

Some remarks are in order:
\begin{remark}
For the sake of simplicity in the above analysis we have discussed only the case of operator monotone index functions. At the same time, by just reusing the arguments from \cite{lu2020balancing, pereverzyev2022introduction} one can extend the obtained results to the case of index functions allowing a splitting into a product of operator monotone and Lipschitz functions. However, we decided not to include the details here, in order to not overload the notations.

\end{remark}
\begin{remark}
The corollary above generalizes and to some extend improves the result of \cite{tong2018analysis}, where the authors have considered the case of linear functional regression, i.e. $p=1$, regularized by the Tikhonov method \eqref{eq:tikpftfr}, \eqref{eq:tikpftfremp} under the assumption that $\N(\lambda) =\mathcal{O} (\lambda^{-\theta})$. The main result of \cite{tong2018analysis} states that $\mathcal{E}(u^N_{\lambda})-\mathcal{E}(u^+) = \mathcal{O}(  N^{\frac{-1}{1+\theta}})$, that is of lower order than \eqref{eq:gen_bound_power_noise_1}, \eqref{eq:gen_bound_power_noise_2}, because the smoothness of 
the target of approximation $u^+$ has not been taken into account in \cite{tong2018analysis}, and because we have assumed that the employed regularization scheme has a higher qualification $q>\frac32$ than the one for Tikhonov method, where $q=1$. On the other hand, by examining the above proofs for $q=1$, we may state that for $u_N^{\lambda}$ obtained by the Tikhonov method with $\lambda=N^{-\frac{1}{2r+\theta}}$, the excess risk $\mathcal{E}(u^N_{\lambda})-\mathcal{E}(u^+)$ is of order $\mathcal{O}(  N^{\frac{-2r}{2r+\theta}})$, which improves the result of \cite{tong2018analysis}, for sufficiently smooth $u^+$ meeting Assumption \ref{ass:smoothness} with $\varphi(\lambda)=\lambda^r$ for $\frac12 \textless r \le 1$. At the same time, as it can be seen in the next section, an iterated version of the Tikhonov method is computationally not much expensive as the standard scheme \eqref{eq:tikpftfr}--\eqref{eq:tikpftfremp}, but having the qualification $q\geq \frac32$, this method then leads to the improved bounds \eqref{eq:gen_bound_power_noise_1}--\eqref{eq:gen_bound_power_noise_2}. 
\end{remark}
\begin{remark}
Regularized linear functional regression (i.e., $p = 1$), but in the (more general) function-to-function setting, has been studied recently by \cite{mollenhauer2022learning} under the assumptions that $\varphi(\lambda)=\lambda^{r}$ and $\theta=1$. Extending that results to the polynomial function-to-function regression is an interesting avenue for future work.
\end{remark}


\section{Computations for Tikhonov regularization and its iterations}
\label{sec:tik_computations}
In this chapter, we, at first, describe how to find the regularized approximation $u_{\lambda}^N$ \eqref{eq:tikpftfremp} 
constructed in the form $u_{\lambda}^N=(u_{\lambda,0}^N,u_{\lambda,1}^N,...,u_{\lambda,p}^N) \in \CalL^2$, so that
\begin{align*}
u_{\lambda,0}^N&=b_0 \in \Real\\
u_{\lambda,l}^N(s_1,...,s_l)&=\sum_{i=1}^N b_{l,i} \prod_{j=1}^l X_i(s_j) \in L^2_l, \quad l=1,...,p.
\end{align*}
Inserting this form into \eqref{eq:tikpftfremp} and equating the corresponding coefficients now result in the following system of linear equations for $b_0$ and for $b_{k,i}$ with $k=1,...,p$ and $i=1,...,N$:
\begin{align} 
(\lambda+1)b_0+\frac1N \sum_{i=1}^N \sum_{l=1}^p \sum_{s=1}^N b_{l,s} c_{i,s}^l &= \frac1N \sum_{i=1}^N Y_i,\label{eq:b_0_coeff} 
\end{align}
\begin{align} 
&\lambda b_{k,i}+\frac1N b_0+\frac1N \sum_{l=1}^p \sum_{s=1}^N b_{l,s} c_{i,s}^l = \frac1N  Y_i, \label{eq:b_ki_coeff}
\end{align}
where $c_{i,s}=\int_{\I} X_i(\tilde{s}) X_s(\tilde{s}) d\mu(\tilde{s})$. 

Next we observe that the form of the equations for the coefficients $b_0, b_{i,k}$ allows for a reduction to a system of only $(N+1)$ equations. Indeed, if we sum up the equations \eqref{eq:b_ki_coeff} for all $i=1,...N$ and then subtract the resulting sum from \eqref{eq:b_0_coeff}, 
we obtain 
\begin{align*}
b_0= \sum_{i=1}^N b_{k,i},
\end{align*}
Moreover, \eqref{eq:b_ki_coeff} allows for the conclusion that $ b_{k,i}=b_{1,i}$ for any $k=1,...,p$, because \eqref{eq:b_ki_coeff} can be rewritten as
\begin{align*}
    \lambda b_{k,i}=\frac1N  Y_i-\frac1N b_0-\frac1N \sum_{l=1}^p \sum_{s=1}^N b_{l,s} c_{i,s}^l,
\end{align*}
where the right-hand side does not depend on $k$.

Thus, the coefficients of polynomial functional regression regularized by the Tikhonov method with a single regularization parameter are completely defined by its first (linear) term.


Recall that the Tikhonov regularization scheme has the qualification $q=1$, while Lemma \ref{lem:gen_bound_1} and Theorem  \ref{thm:main} suggest to use of a regularization  with the qualification $q \geq \frac32$.
Therefore, we consider the iterated Tikhonov regularization with $q \geq 2$ being the number of iteration steps:
\begin{align*} 
\lambda u_{\lambda,q}^N + [\A^* \A]_N u_{\lambda,q}^N &= \lambda u_{\lambda,q-1}^N +[\A^* Y]_N , \\
u_{\lambda,0}^N &=0,
\end{align*}
where $u_{\lambda,q}^N=(u_{\lambda,0,q}^N,u_{\lambda,1,q}^N,...,u_{\lambda,p,q}^N) \in \CalL^2$, and 
\begin{align*}
u_{\lambda,0,q}^N&=b_{0,q} \in \Real,\\
u_{\lambda,l,q}^N(t,s_1,...,s_l)&=\sum_{i=1}^N b_{l,i,q} \prod_{j=1}^l X_i(s_j) \in L^2_l, \quad l=1,...,p.
\end{align*}
To construct $u_{\lambda,q}^N$ one needs to solve the following system of linear equations for coefficients $b_{0,q}, b_{l,i,q} $ :

\begin{align*} 
(\lambda+1)b_{0,q}+\frac1N \sum_{i=1}^N \sum_{l=1}^p \sum_{s=1}^N b_{l,s,q} c_{i,s}^l &=  \lambda  b_{0,q-1}+\frac1N \sum_{i=1}^N Y_i, \\
\lambda b_{k,i,q}+\frac1N b_{0,q}+\frac1N \sum_{l=1}^p \sum_{s=1}^N b_{l,s,q} c_{i,s}^l &= \lambda b_{k,i,q-1} +\frac1N  Y_i, \\ k=1,...,p, \quad i=1,...,N.
\end{align*}
Here, by the same reasoning as above, we also see that this reduces to a system of $(N+1)$ equations, from which only the coefficients $b_{0,q},b_{1,i,q}$ of the components,  $u_{\lambda,0,q}^N,u_{\lambda,1,q}^N$ need to be found.

Overall, we can observe that a single-parameter regularization in the context of PFR puts emphasis on the linear part of that regression. This can be seen as a limitation. One possibility to overcome it is to use a multiple-parameter regularization scheme, which we plan to discuss in a forthcoming work. But even a multiple-parameter regularization will still reduce functional regression to a linear problem. In order to go beyond the linearity, one can employ deep neural networks, which have been discussed recently in \cite{yao2021deep, shi2024nonlinear}. Thus there are many ways to explore in functional data analysis.

\section{Experimental evaluation}
In this section we demonstrate the action of the above presented algorithm on a toy example.
To this end, as an explanatory variable, we consider a random process
\begin{align*}
X(\omega, t)=\sum_{k=0}^5 \xi_k(\omega) \cos (k t), t \in[0,2 \pi],
\end{align*}
where $\xi_k(\omega)$ are random variables uniformly distributed on $[-1,1]$.
Consider also the response variable $Y(\omega)$ related to the explanatory variable $X(\omega, t)$  as follows:

\begin{align}
 Y(\omega)&=u_0^+ +\int_0^{2 \pi} X(\omega, t) u_1^+(t) d \mu(t) \nonumber\\
&+\int_0^{2 \pi} \int_0^{2 \pi} X(\omega, t) X(\omega, \tau) u_2^+(t, \tau) d \mu(t) d\mu( \tau) . \label{eq:y_def_exp}
\end{align}
In our simulations, we use $u^{+}=\left(u_0^{+}, u_1^{+}, u_2^{+}\right)$ with
\begin{align*}
u_0^{+}=2, u_1^{+}=1+\cos t+\cos 5 t, u_2^{+}=\cos 2 t \cos 2 \tau,
\end{align*}
that gives a response
\begin{align*}
Y(\omega)=2+\pi^2\left(\xi_2(\omega)\right)^2+\pi\left(2 \xi_0(\omega)+\xi_1(\omega)+\xi_5(\omega)\right).
\end{align*}

We simulate $N$ independent samples  $(Y_i, X_i(\cdot))$ of $(Y(\omega), X(\omega, t))$ and use them to construct the regularized quadratic approximation
$u_{\lambda,q}^N= (u_{\lambda,0,q}^N,u_{\lambda,1,q}^N,u_{\lambda,2,q}^N)$ of $u^{+}=\left(u_0^{+}, u_1^{+}, u_2^{+}\right)$ by the iterated Tikhonov regularization with $q=4$, as it is described in the previous section.

On Figures \ref{fig1} - \ref{fig3} we plot the error $\norm{u^{+}-u_{\lambda, 4}^N}_{\CalL^2}$ against the number of the used samples $N=1,2, \ldots, 40$, for a  wide range of the regularization parameter, from $\lambda=10^{-1}$ to $\lambda=10^{-9}$.  For all that parameters we observe that the error exhibits a tendency of decreasing with the increase of the number of samples, and this is in agreement with our analysis.

Observe also that our toy problem is a finite dimensional one, such that $u^{+}=\left(u_0^{+}, u_1^{+}, u_2^{+}\right)$ is completely defined by at most 41 parameters. At the same time, as it can be seen from Figure \ref{fig3}, starting already from $N=27$ samples the regularized quadratic approximation
$u_{\lambda,4}^N= (u_{\lambda,0,4}^N,u_{\lambda,1,4}^N,u_{\lambda,2,4}^N)$ with $\lambda=10^{-9}$ gives a very accurate reconstruction of the exact $u^{+}=\left(u_0^{+}, u_1^{+}, u_2^{+}\right)$, such that

\begin{figure} 
\includegraphics[width=\textwidth]{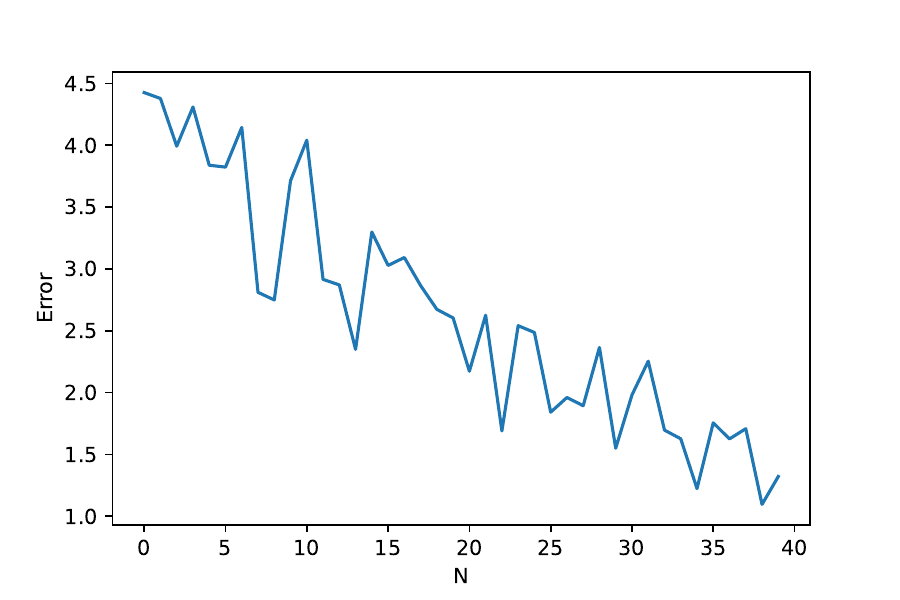}
\caption{Error curve for $\lambda=10^{-1}$}
\label{fig1}
\end{figure}

\begin{figure} 
\includegraphics[width=\textwidth]{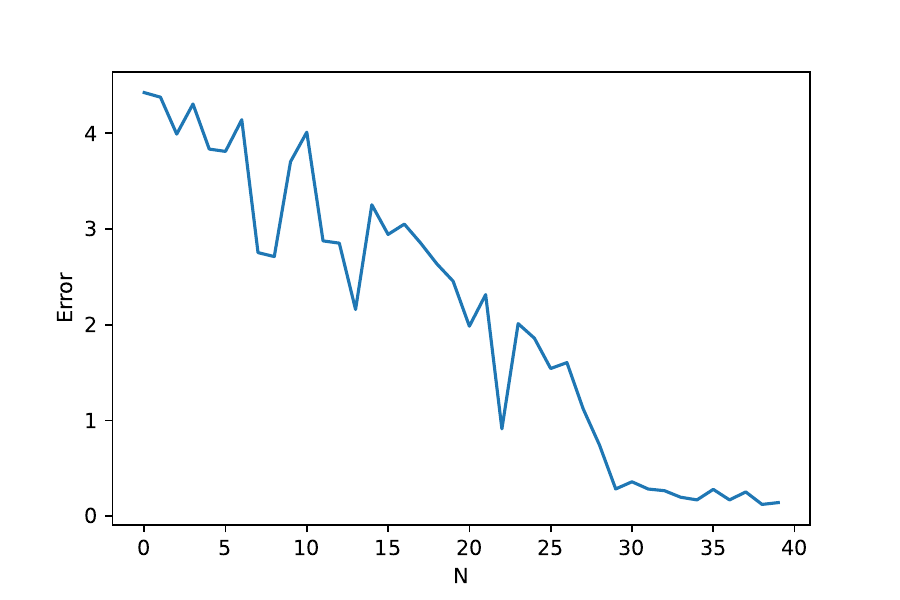}
\caption{Error curve for $\lambda=10^{-3}$}
\label{fig2}
\end{figure}

\begin{figure} 
\includegraphics[width=\textwidth]{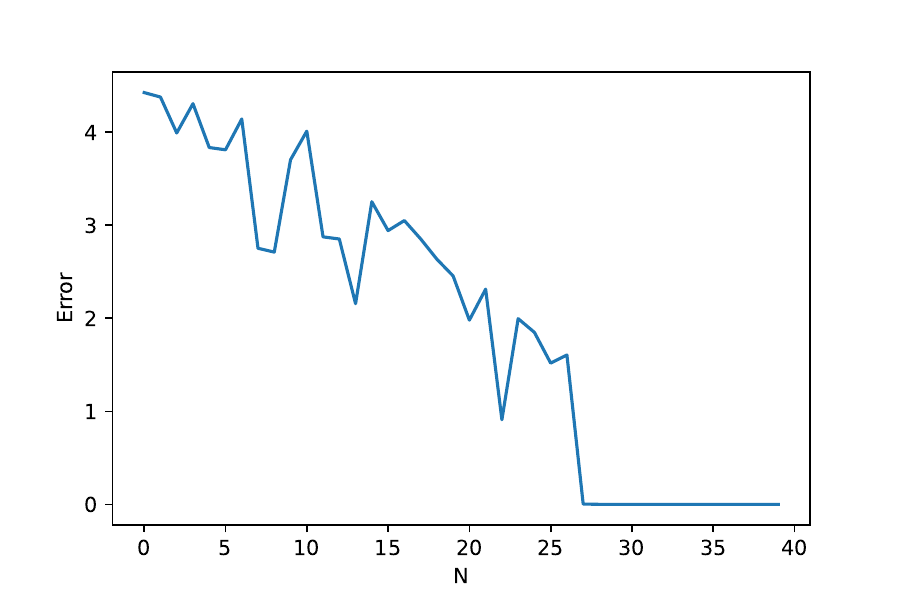}
\caption{Error curve for $\lambda=10^{-9}$}
\label{fig3}
\end{figure}

\begin{align*}
& u_{\lambda, 0,4}^N=1.99999, \quad u_{\lambda, 1,4}^N=1.0000+0.99999 \cos t+1.00000 \cos 5 t, \\
& u_{\lambda, 2,4}^N=0.99999 \cos 2 t \cos 2 \tau+\sum_{k \neq 2}^5 \beta_{k} \cos k t \cos k \tau, \\
&  \left|\beta_k\right|<7 \times 10^{-9}.
\end{align*}

In addition, we may report that in the considered example, the attempt to employ a linear functional regression model with $p=1$ gives misleading approximations (not exhibited in the figures).
At the same time, if we consider $u^{+}=\left(u_0^{+}, u_1^{+}\right)$, i.e. a linear response $Y(\omega)$, then both regression models (linear and quadratic) give good approximations starting from $N=27$ samples. It hints at a recommendation to not be afraid of the use of PFR of higher order $p>1$.

For completeness, let us also discuss the case, when we explicitly introduce noise to the labels. Specifically, let $\varepsilon \sim \mathcal{N}(0,10^{-3})$ additive Gaussian noise with mean $0$ and variance $10^{-3}$, so that the labels are now given as $\tilde{Y}(\omega)=Y(\omega)+\varepsilon(\omega)$, for $Y$ as in \eqref{eq:y_def_exp}. Then, in the experiments we overall observe similar behavior, the errors increase only slightly. Let us report the error curve corresponding to the case $\lambda=10^{-9}$ in Figure \ref{fig:noise}.

\begin{figure} 
\includegraphics[width=\textwidth]{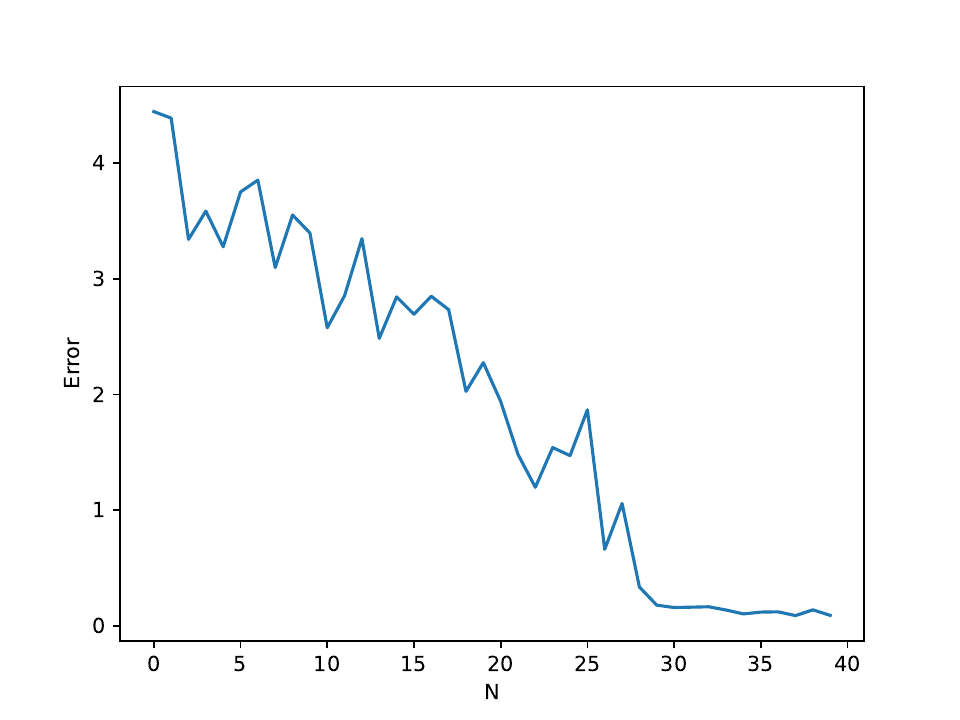}
\caption{Error curve for $\lambda=10^{-9}$ with additive Gaussian noise.}
\label{fig:noise}
\end{figure}

\section{Acknowledgements}
The authors are grateful to two anonymous referees for their comments and suggestions that led to improvements in this paper. 

The research reported in this paper has been supported by the Federal Ministry for Climate Action, Environment, Energy, Mobility, Innovation and Technology (BMK), the Federal Ministry for Digital and Economic Affairs (BMDW), and the Province
of Upper Austria in the frame of the COMET–Competence Centers for Excellent Technologies Programme and the COMET Module S3AI managed by the Austrian Research Promotion Agency FFG.


\bibliography{colt}
\bibliographystyle{plain}
\end{document}